\theoremstyle{definition}
\newtheorem{definition}{Definition}
\newtheorem{remark}{Remark}
\theoremstyle{plain}
\newtheorem{lemma}{Lemma}
\newtheorem{proposition}{Proposition}
\newtheorem{theorem}{Theorem}
\newtheorem{corollary}{Corollary}
\newtheorem{example}{Example}
\begin{document}

\title[Partially Alternative Real Division  Algebras]
{partially alternative real division  algebras with a few imaginary units}

\author{Tianran  Hua}

\address{T. Hua, Department Of Mathematical and
Computational Sciences, University of Toronto Mississauga, 3359
Mississauga Road N., Mississauga, On L5L 1C6}
\email{tianran.hua@mail.utoronto.ca}

\author{Marina  Tvalavadze}

\address{M. Tvalavadze, Department Of Mathematical and
Computational Sciences, University of Toronto Mississauga, 3359
Mississauga Road N., Mississauga, On L5L 1C6}
\email{marina.tvalavadze@utoronto.ca}

\begin{abstract}
In this paper, we extend the investigation of four-dimensional partially alternative algebras $\mathcal A$ initiated in \cite{HNT}. The partial alternativity condition, a natural generalization of the alternativity axiom, broadens the class of alternative algebras, enabling the exploration of non-associative structures with weakened algebraic constraints.

We focus on four-dimensional partially alternative unital real division algebras (RDAs) equipped with at least three distinct imaginary units and a reflection. Under these constraints, we achieve a complete classification of the isomorphism classes of such algebras and find that there are, in fact, infinitely many distinct classes.
This classification generalizes  the classical Frobenius and Hurwitz theorems in the four-dimensional setting. Moreover,  we  explicitly characterize  automorphism groups for these algebras showing that it is either $SO(3)$ if $\mathcal A\cong \mathbb H$ (the real quaternions) or $\mathbb Z_2$, otherwise. Since partially alternative algebras do not exist in odd dimensions, and the structure of two-dimensional partially alternative algebras is straightforward, we provide a classification of such algebras up to dimension four.
\end{abstract}
\maketitle
\textit{ Keywords:} real division algebras, alternative algebras, automorphism groups, reflections, partially alternative algebras, isomorphism classes.

{\small \bf Mathematics Subject Classification 2010: 17A35, 17A36.}

\section{Introduction}

The classical examples of finite-dimensional associative real division algebras are the real numbers $\mathbb{R}$, the complex numbers $\mathbb{C}$, and the quaternions $\mathbb{H}$, with dimensions 1, 2, and 4, respectively. Frobenius's theorem \cite{Frob}, established in 1878, demonstrates that these three algebras constitute a complete classification: every finite-dimensional associative division algebra over the real numbers is isomorphic to exactly one of $\mathbb{R}$, $\mathbb{C}$, or $\mathbb{H}.$ In his 1930 theorem \cite{Zorn}, Zorn demonstrated that relaxing the associativity requirement to alternativity introduces exactly one additional finite-dimensional real division algebra: the 8-dimensional octonion algebra $\mathbb O$. This result completes the classification of real alternative division algebras, which are limited to $\mathbb R$ (reals), $\mathbb C$ (complexes), $\mathbb H$ (quaternions), and $\mathbb O$ (octonions).

Finite-dimensional real algebras containing the algebra of complex numbers in a specified manner have already attracted significant interest from researchers. For example, in \cite{MPS}, the authors investigate \emph{locally-complex} algebras defined by the property that the subalgebra generated by any element not a scalar multiple of the unit 1 is isomorphic to $\mathbb C$.
Their work provides a complete classification of such algebras in dimensions up to 4. 

In \cite{HNT}, the \emph{partial alternativity property}—a natural generalization of alternativity for nonassociative algebras—was introduced for the first time. These are unital algebras that contain a subalgebra isomorphic to the complex numbers, with respect to which they are $\mathbb{C}$-bimodules. As shown in the same work, such algebras exist in even dimensions. Here, we demonstrate that they do not exist in odd dimensions.

Let us recall the rigorous definition of \emph{partially alternative} algebras.

\begin{definition} \label{ImaginaryUnits} 
Let $\mathcal{A}$ be a real nonassociative algebra with a unit element $1$. An element $q \in \mathcal{A}$ is called an \textit{imaginary unit}  if $q^2=-1$.
Denote by $\mathcal{I}_\mathcal{A}$  the set of all imaginary units in $\mathcal{A}$.
\end{definition}

\begin{definition}\label{PA}
Let $\mathcal{A}$ be a real  nonassociative algebra with a unit element $1$ and $\mathcal{I}_\mathcal{A}\neq \emptyset$ (i.e. the set is nonempty).  Let  $(a, b, c)= (ab)c - a(bc)$ for $a,b,c \in \mathcal A$.  Then\\
(1)  $\mathcal{A}$ is called \textit{partially left alternative}  if  for all $x\in\mathcal{I}_\mathcal{A}$ and $y\in\mathcal{A}$,  $(x,x,y)=0$;\\
(2) $\mathcal{A}$ is called \textit{partially flexible} if  for all $x\in\mathcal{I}_\mathcal{A}$ and $y\in\mathcal{A}$, $(x,y,x)=0$;\\
(3) $\mathcal{A}$ is called \textit{partially right alternative} if for all $x\in\mathcal{I}_\mathcal{A}$ and $y\in\mathcal{A}$, $(y,x,x)=0$.

The algebra $\mathcal{A}$ is called \textit{partially alternative}  if it is partially left alternative, flexible, and right alternative.
\end{definition}

Assuming a four-dimensional partially alternative real division algebra possesses a unit and a reflection (i.e., a nontrivial order-2 automorphism), the following result was established.

\begin{theorem}\cite{HNT} Let $\mathcal A$ be a four-dimensional partially alternative real division algebra with the unity $1$.  Assume that $\mathcal A$ admits a reflection $\varphi$.  Then there exists a basis $\{1, i, w, v\}$ in $\mathcal A$ with 
the following multiplication table $(T_p)$:

$$ \begin{tabular}{c|cccc} 
          & $\bf 1$            & $\bf i$                                  & $\bf w$                                                 & $\bf v$      \\ 
\hline
$\bf 1$     &  $1$           & $i$                                  &  $w$                                                 &   $v$     \\
$\bf i$      &  $i$            & $-1$                                &  $-v$                                                &   $w $ \\
$\bf w$     &  $w$          & $v$                                 &  $ a 1 + b i$                                       & $ c 1 + d i$   \\ 
$\bf v$     &  $v$           &  $-w$                              &   $e 1 + f i $                                       & $g 1 +  h i$  \\ 
\end{tabular} \eqno(T_p) $$
where $a, b, c, d, e, f, g, h \in\mathbb R$.
\end{theorem}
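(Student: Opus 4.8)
The plan is to use the reflection to install a $\mathbb{Z}_{2}$-grading on $\mathcal{A}$ and then to determine the two graded pieces and their interaction by means of partial alternativity. Regard $\varphi$ as an involutive automorphism and let $\mathcal{A}_{+}$, $\mathcal{A}_{-}$ be its $(+1)$- and $(-1)$-eigenspaces, so $\mathcal{A}=\mathcal{A}_{+}\oplus\mathcal{A}_{-}$. Since $\varphi(1)=1$ we have $1\in\mathcal{A}_{+}$, and multiplicativity of $\varphi$ gives $\mathcal{A}_{+}\mathcal{A}_{+}\subseteq\mathcal{A}_{+}$, $\mathcal{A}_{+}\mathcal{A}_{-},\,\mathcal{A}_{-}\mathcal{A}_{+}\subseteq\mathcal{A}_{-}$ and $\mathcal{A}_{-}\mathcal{A}_{-}\subseteq\mathcal{A}_{+}$; in particular $\mathcal{A}_{+}$ is a unital subalgebra (hence itself a real division algebra) and $\dim\mathcal{A}_{-}\geq1$ because $\varphi\neq\mathrm{id}$.

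Next I would pin down the dimensions as $\dim\mathcal{A}_{+}=\dim\mathcal{A}_{-}=2$. The value $\dim\mathcal{A}_{+}=4$ is the excluded case $\varphi=\mathrm{id}$. If $\dim\mathcal{A}_{+}$ were odd, then for any $a\in\mathcal{A}_{+}$ the operator $L_{a}|_{\mathcal{A}_{+}}$ on an odd-dimensional real space has a real eigenvalue $\lambda$ with eigenvector $0\neq u\in\mathcal{A}_{+}$, so $(a-\lambda 1)u=0$ in the division algebra $\mathcal{A}$ forces $a=\lambda 1$ and hence $\mathcal{A}_{+}=\mathbb{R}1$; thus $\dim\mathcal{A}_{+}\neq3$. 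If $\dim\mathcal{A}_{+}=1$, choose $0\neq w\in\mathcal{A}_{-}$: then $L_{w}$ is injective on $\mathcal{A}$ but $L_{w}(\mathcal{A}_{-})\subseteq\mathcal{A}_{-}\mathcal{A}_{-}\subseteq\mathbb{R}1$, giving $3=\dim\mathcal{A}_{-}\leq1$, a contradiction. Hence $\dim\mathcal{A}_{+}=2$, and since a $2$-dimensional unital algebra without zero divisors is isomorphic to $\mathbb{C}$, I may fix $i\in\mathcal{A}_{+}$ with $i^{2}=-1$ and write $\mathcal{A}_{+}=\mathbb{R}1\oplus\mathbb{R}i$.

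Now I would study $\mathcal{A}_{-}$ as an $\mathcal{A}_{+}$-bimodule. Partial alternativity applied with $x=i$ yields $L_{i}^{2}=R_{i}^{2}=-\mathrm{id}$ and $L_{i}R_{i}=R_{i}L_{i}$ on all of $\mathcal{A}$; since $i\in\mathcal{A}_{+}$ both $L_{i}$ and $R_{i}$ preserve $\mathcal{A}_{-}$, so their restrictions are commuting complex structures on the $2$-dimensional space $\mathcal{A}_{-}$. Then $K:=(L_{i}R_{i})|_{\mathcal{A}_{-}}$ satisfies $K^{2}=\mathrm{id}$ and commutes with $L_{i}|_{\mathcal{A}_{-}}$; were $K\neq\pm\mathrm{id}$, its two eigenspaces would be $1$-dimensional and $L_{i}$-invariant, which is impossible since no $1$-dimensional real space admits a map squaring to $-\mathrm{id}$. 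Hence $R_{i}=\pm L_{i}$ on $\mathcal{A}_{-}$. The hard part is excluding the $+$ sign: if $R_{i}|_{\mathcal{A}_{-}}=L_{i}|_{\mathcal{A}_{-}}$, then, $L_{i}$ and $R_{i}$ also agreeing on the commutative subalgebra $\mathcal{A}_{+}$, the element $i$ is central in $\mathcal{A}$. Picking $0\neq w\in\mathcal{A}_{-}$ we get $w^{2}\in\mathcal{A}_{-}\mathcal{A}_{-}\subseteq\mathcal{A}_{+}\cong\mathbb{C}$, so $w^{2}=z^{2}$ for some $z\in\mathcal{A}_{+}$; centrality of $i$ makes $w$ commute with every element of $\mathcal{A}_{+}=\mathbb{R}1\oplus\mathbb{R}i$, so $(w-z)(w+z)=w^{2}-z^{2}=0$ with both factors nonzero (neither lies in $\mathcal{A}_{+}$), contradicting the division property. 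Therefore $R_{i}=-L_{i}$ on $\mathcal{A}_{-}$.

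To finish, take any $0\neq w\in\mathcal{A}_{-}$ and put $v:=-iw\in\mathcal{A}_{-}$. Since $L_{i}|_{\mathcal{A}_{-}}$ is a complex structure, $\{w,v\}$ is a basis of $\mathcal{A}_{-}$ and $\{1,i,w,v\}$ a basis of $\mathcal{A}$. Then $iw=-v$ by construction, $iv=-i(iw)=-L_{i}^{2}w=w$, $wi=R_{i}w=-L_{i}w=-iw=v$ and $vi=R_{i}v=-iv=-w$, while the four products $w^{2},\,wv,\,vw,\,v^{2}$ all lie in $\mathcal{A}_{-}\mathcal{A}_{-}\subseteq\mathcal{A}_{+}=\mathbb{R}1\oplus\mathbb{R}i$ and hence have the stated form $a1+bi$, $c1+di$, and so on. Reading these off gives precisely the table $(T_{p})$. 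I expect the exclusion of a central imaginary unit in the third step to be the main obstacle: it is the only place where a genuinely non-formal consequence of the division hypothesis — the factorization $w^{2}-z^{2}=(w-z)(w+z)$ producing a zero divisor — has to be combined with the partial-alternativity identities, the remaining steps being essentially dimension counts and linear algebra.
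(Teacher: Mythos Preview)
The present paper does not itself prove this theorem; it is quoted from \cite{HNT}. Judging by what the paper later recalls about that result---namely the decomposition $\mathcal{A}=\mathcal{C}\oplus\mathcal{B}$ with $\mathcal{C}\cong\mathbb{C}$, $\mathcal{C}\mathcal{B}=\mathcal{B}\mathcal{C}=\mathcal{B}$, $\mathcal{B}\mathcal{B}\subseteq\mathcal{C}$, and the choice $v=wi$ for an arbitrary $w\in\mathcal{B}$---your argument follows essentially the same route (with $\mathcal{A}_{+}=\mathcal{C}$ and $\mathcal{A}_{-}=\mathcal{B}$), and it is correct.

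The one place where you and the paper's own development differ slightly is the exclusion of the case $R_i|_{\mathcal{A}_-}=L_i|_{\mathcal{A}_-}$, i.e.\ of a central imaginary unit. The paper isolates this as a separate statement (its Proposition on $\mathcal{I}_{\mathcal{A}}\cap\mathcal{NC}(\mathcal{A})=\varnothing$) and argues it by constructing a \emph{second} imaginary unit $j$ via a square root in $\mathcal{C}$ and then exhibiting the zero divisors $(i+j)(i-j)=0$. Your version takes a square root $z\in\mathcal{A}_{+}$ of $w^{2}$ directly and factors $(w-z)(w+z)=0$. The two arguments are equivalent in spirit; yours is marginally shorter because it avoids normalising $w$ to an imaginary unit, while the paper's formulation has the advantage of being stated independently of the reflection and reused later in the general framework of Section~2.
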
 
\par\medskip

This paper extends prior work (\cite{HNT}) by classifying four-dimensional unital partially alternative real division algebras equipped with a reflection and at least three imaginary units. 
 By relaxing the requirement of alternativity to partial alternativity, we obtain new examples of real division algebras in dimension four, and moreover, demonstrate the existence of infinitely many isomorphism classes of such algebras.

\section{A General Framework for Partial Alternative Algebras}
In \cite{HNT}, the authors established the existence of partially alternative real algebras in every even dimension. The present work nonetheless remains primarily oriented toward the applications of partial alternativity to the classification of four-dimensional real division algebras. Notwithstanding our primary focus, we first provide a concise exposition of the general theory of partial alternativity.
\begin{proposition}
Let $\mathcal{A}$ be a real algebra with a unit $1\in\mathcal{A}$ and a set of imaginary units $\mathcal{I_A}$. Denote by $L_x:\mathcal{A}\rightarrow\mathcal{A}$ and $R_x:\mathcal{A}\rightarrow\mathcal{A}$ the operator of left multiplication by $x$ and the operator of right multiplication by $x$, respectively. Then,
\begin{itemize}
    \item[(1)] $\mathcal{A}$ is partially left alternative if and only if for every imaginary unit $i\in\mathcal{I_A}$, the operator $L_i$ of left multiplication is a linear complex structure on the vector space $\mathcal{A}$;
    \item[(2)] $\mathcal{A}$ is partially flexible if and only if for every imaginary unit $i\in\mathcal{I_A}$, the operators $L_i$ and $R_i$ commute;
    \item[(3)] $\mathcal{A}$ is partially right alternative if and only if for every imaginary unit $i\in\mathcal{I_A}$, the operator $R_i$ of right multiplication is a linear complex structure on the vector space $\mathcal{A}$.
\end{itemize}
\end{proposition}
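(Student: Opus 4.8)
The plan is to prove each of the three equivalences by directly expanding the relevant associator in terms of the multiplication operators $L_i$ and $R_i$, using only the defining relation $i^2=-1$ of an imaginary unit together with the bilinearity of the product. Recall that, by definition, a linear complex structure on a real vector space $V$ is a linear endomorphism $J$ of $V$ satisfying $J^2=-\mathrm{id}_V$; since $L_i$ and $R_i$ are automatically linear, each clause of the Proposition will reduce to an identity of linear operators on $\mathcal{A}$, quantified over the (nonempty) set $\mathcal{I_A}$.

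For part (1), I would fix $i\in\mathcal{I_A}$ and $y\in\mathcal{A}$ and compute
\[
(i,i,y)=(ii)y-i(iy)=-y-L_i\bigl(L_i(y)\bigr)=-\bigl(L_i^2+\mathrm{id}\bigr)(y).
\]
Hence $(i,i,y)=0$ for all $y\in\mathcal{A}$ if and only if $L_i^2=-\mathrm{id}$, i.e. $L_i$ is a complex structure; since this must hold for every $i\in\mathcal{I_A}$, partial left alternativity is equivalent to the asserted operator condition. Part (3) is handled symmetrically: from
\[
(y,i,i)=(yi)i-y(ii)=R_i\bigl(R_i(y)\bigr)+y=\bigl(R_i^2+\mathrm{id}\bigr)(y),
\]
the identity $(y,i,i)=0$ for all $y$ is equivalent to $R_i^2=-\mathrm{id}$. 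For part (2), the key is to keep careful track of the order of composition: for $i\in\mathcal{I_A}$ and $y\in\mathcal{A}$,
\[
(i,y,i)=(iy)i-i(yi)=R_i\bigl(L_i(y)\bigr)-L_i\bigl(R_i(y)\bigr)=(R_iL_i-L_iR_i)(y),
\]
so $(i,y,i)=0$ for all $y$ if and only if $L_i$ and $R_i$ commute, and quantifying over $i\in\mathcal{I_A}$ gives the statement.

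Since every step is an elementary associator expansion, there is no genuine obstacle. The only point requiring care is the bookkeeping of left versus right multiplication in part (2): one must read $(iy)i$ as $R_i$ applied to $L_i(y)$ and $i(yi)$ as $L_i$ applied to $R_i(y)$, so that the associator becomes the commutator of $R_i$ and $L_i$ (the sign is immaterial for the commutativity conclusion, but it should be recorded cleanly). It is also worth noting at the outset that $\mathcal{I_A}\neq\emptyset$ is part of the standing hypothesis, so the conditions ``for every $i\in\mathcal{I_A}$'' are not vacuous in the settings we care about.
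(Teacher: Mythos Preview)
Your proof is correct and follows essentially the same approach as the paper: both arguments expand the associator $(i,i,y)$ directly to obtain $L_i^2=-\mathrm{id}$, and the paper then simply remarks that the other two cases are analogous, which is exactly what you carry out in detail for parts (2) and (3).
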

\begin{proof}
Fix an imaginary unit $i\in\mathcal{I_A}$. Then, for all $x\in\mathcal{A}$, the associator $(i,i,x)$ vanishes if and only if $(L_i^2)(x)=i(ix)=(ii)x=-x=(-\operatorname{id}_\mathcal{A})(x)$ which is equivalent to $L_i$ being a linear complex structure on $\mathcal{A}$. The proof for the other two cases of partial flexibility and partial right alternativity follows analogously.
\end{proof}
\begin{corollary}
Let $\mathcal{A}$ be a real algebra. If $\mathcal{A}$ is partially left alternative or partially right alternative, then $\mathcal{A}$ is an even-dimensional real vector space.
\end{corollary}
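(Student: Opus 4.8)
The plan is to obtain the statement as an immediate corollary of the preceding Proposition, combined with the elementary fact that a real vector space carrying a linear complex structure cannot have odd dimension; essentially all the work has already been done in the Proposition, so this will be a short deduction.

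First I would unwind the hypotheses. The very definition of partial left alternativity (Definition \ref{PA}) presupposes $\mathcal{I_A}\neq\emptyset$, so I may fix an imaginary unit $i\in\mathcal{I_A}$. If $\mathcal{A}$ is partially left alternative, then by part (1) of the Proposition the operator $L_i\colon\mathcal{A}\to\mathcal{A}$ is a linear complex structure, that is $L_i^2=-\operatorname{id}_\mathcal{A}$; in particular $L_i$ is a linear automorphism of $\mathcal{A}$ with inverse $-L_i$. If instead $\mathcal{A}$ is partially right alternative, the same conclusion holds for $R_i$ by part (3) of the Proposition. In either case $\mathcal{A}$ admits a linear complex structure $J$ (namely $J=L_i$ or $J=R_i$) with $J^2=-\operatorname{id}_\mathcal{A}$.

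Next I would invoke the standard consequence. In the finite-dimensional setting of the paper, applying $\det$ to $J^2=-\operatorname{id}_\mathcal{A}$ yields $\det(J)^2=(-1)^{\dim_\mathbb{R}\mathcal{A}}$; since the left-hand side is the square of a nonzero real number it is strictly positive, forcing $(-1)^{\dim_\mathbb{R}\mathcal{A}}=1$, i.e. $\dim_\mathbb{R}\mathcal{A}$ is even. Alternatively, and without any finiteness assumption, one can note that $J$ equips $\mathcal{A}$ with the structure of a complex vector space via $(\alpha+\beta i)\cdot x:=\alpha x+\beta J(x)$ for $\alpha,\beta\in\mathbb{R}$, so that $\dim_\mathbb{R}\mathcal{A}=2\dim_\mathbb{C}\mathcal{A}$ is even.

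I do not anticipate any genuine obstacle: the substantive content has been offloaded to the Proposition, and what remains is a one-line linear-algebra observation. The only point deserving a moment's care is to record that $J$ is actually invertible, so that $\det(J)\neq 0$ and the parity argument applies; this is immediate from $J\cdot(-J)=\operatorname{id}_\mathcal{A}$.
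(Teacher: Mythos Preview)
Your proposal is correct and follows essentially the same approach as the paper: invoke the preceding Proposition to obtain a linear complex structure $J$ on $\mathcal{A}$, then derive even-dimensionality from a standard linear-algebra fact. The paper phrases that last step via the minimal polynomial $\lambda^2+1$ and the rational canonical form (so that the characteristic polynomial is $(\lambda^2+1)^n$), rather than your determinant identity or induced complex vector space structure, but this is a cosmetic difference.
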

\begin{proof}
Let $\mathcal{A}$ be partially left alternative or partially right alternative. Then, $\mathcal{A}$ admits a linear complex structure $J:\mathcal{A}\rightarrow\mathcal{A}$ whose minimal polynomial is the quadratic polynomial $\lambda^2+1$. By the use of rational canonical forms, the characteristic polynomial of $J$ is of the form $(\lambda^2+1)^n$. It follows that $\mathcal{A}$ is even-dimensional.
\end{proof}
We recall the well-known fact that any two of the three properties\textemdash left alternativity, flexibility, and right alternativity\textemdash jointly imply the third, and hence alternativity \cite{Schafer}. It remains an open question whether this implication extends to the setting of partial alternativity, and no counterexamples are currently known.

It is important to emphasise, however, that when considered in isolation, partial flexibility is strictly \textit{weaker} than either partial left or partial right alternativity. In particular, it imposes no restriction on the dimension of the algebra.
\begin{example}
There exists an odd-dimensional partially flexible algebra. 
\end{example}
\begin{proof}
$$ \begin{tabular}{c|ccc} 
& $\bf 1$ & $\bf i$ & $\bf x$ \\ 
\hline
$\bf 1$ & $1$ & $i$ & $x$  \\
$\bf i$ & $i$ & $-1$ & $i$ \\
$\bf x$ & $x$ & $i$ & $x$ 
\end{tabular}$$
The three-dimensional algebra given by the above multiplication table is partially flexible. It suffices to check that the algebra admits exactly two imaginary units.
\end{proof}

We write $(\mathcal{A},J)$ for the complex vector space whose underlying real vector space is $\mathcal{A}$ and which is endowed with the linear complex structure $J:\mathcal{A}\to\mathcal{A}$; we denote by $\operatorname{span}_{J}(S)$ the complex span of any subset $S\subseteq\mathcal{A}$ in $(\mathcal{A},J)$.

The fact that the operators $L_i$ and $R_i$ commute implies that each is complex-linear with respect to the complex structure induced by the other; that is, for every imaginary unit $i\in\mathcal{I_A}$, the operators $L_i:(\mathcal{A},R_i)\rightarrow(\mathcal{A},R_i)$ and $R_i:(\mathcal{A},L_i)\rightarrow(\mathcal{A},L_i)$ are complex-linear. This follows directly from the definitions.

Combined with partial left and right alternativity, the mere commutativity of the left and right multiplication operators $L_i$ and $R_i$ allows us to establish a few stronger results.

\begin{proposition}
Let $\mathcal{A}$ be a partially alternative real algebra. Then, for all imaginary units $i\in\mathcal{I_A}$, the linear endomorphism $x\mapsto ixi$ on $\mathcal{A}$ (equivalently, the adjoint action $\operatorname{ad}_i:=x\mapsto ix(-i)$ which differs by a sign) is an involution, by virtue of the partial flexibility identity.
\end{proposition}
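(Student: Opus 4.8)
The plan is to reduce the claim to a three-line computation with the left- and right-multiplication operators. First I would record that, by partial flexibility, the associator $(i,x,i)$ vanishes for every $x\in\mathcal{A}$, so $(ix)i=i(xi)$; this is precisely what makes the symbol $ixi$ unambiguous and shows that the endomorphism $T\colon x\mapsto ixi$ coincides with both $L_iR_i$ and $R_iL_i$. In particular $T$ is linear, being a composite of linear maps, and $\operatorname{ad}_i = \bigl(x\mapsto ix(-i)\bigr) = -T$, so it suffices to treat $T$.

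Next I would invoke the Proposition stated above: part~(2) gives $L_iR_i=R_iL_i$, part~(1) gives $L_i^2=-\operatorname{id}_\mathcal{A}$ (partial left alternativity), and part~(3) gives $R_i^2=-\operatorname{id}_\mathcal{A}$ (partial right alternativity). Hence
$$T^2=(L_iR_i)(L_iR_i)=L_i(R_iL_i)R_i=L_i(L_iR_i)R_i=L_i^2R_i^2=(-\operatorname{id}_\mathcal{A})(-\operatorname{id}_\mathcal{A})=\operatorname{id}_\mathcal{A},$$
so $T$ is an involution, and therefore $\operatorname{ad}_i^2 = T^2 = \operatorname{id}_\mathcal{A}$ as well.

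There is essentially no obstacle here; the only point requiring any care is that if one expands $T^2(x)=i\bigl(ixi\bigr)i$ directly, as a product of the four factors $i,i,x,i$, there are a priori several distinct parenthesizations, and one must use partial flexibility (applied to the inner element $ixi$, and then again after removing one outer factor $i$) to collapse them before the identities $i(iy)=-y$ and $(yi)i=-y$ coming from partial left and right alternativity can be applied. Passing to the operator formalism, as above, packages all of this bookkeeping cleanly, which is why I would present the argument that way.
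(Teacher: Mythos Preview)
Your proof is correct and essentially identical to the paper's: both express $x\mapsto ixi$ as the composite $L_iR_i$, then use the commutativity of $L_i$ and $R_i$ together with $L_i^2=R_i^2=-\operatorname{id}_\mathcal{A}$ to compute $(L_iR_i)^2=L_i^2R_i^2=\operatorname{id}_\mathcal{A}$. Your version is slightly more explicit in noting that partial flexibility is what makes the expression $ixi$ well-defined and in handling the $\operatorname{ad}_i=-T$ remark, but the argument is the same.
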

\begin{proof}
Fix an imaginary unit $i\in\mathcal{I_A}$. The map $x\mapsto ixi$ is given by the composition $L_iR_i$, and, hence, it defines a linear endomorphism on $\mathcal{A}$. Due to associativity and commutativity of the operators $L_i$ and $R_i$, we observe that
$$(L_iR_i)^2=L_i(R_iL_i)R_i=L_i^2R_i^2=\operatorname{id}_{\mathcal{A}}.$$
Thus, $x\mapsto ixi$ is an involution.
\end{proof}
\begin{corollary}\label{ESD}
Let $\mathcal{A}$ be a partially alternative real algebra. For an imaginary unit $i\in\mathcal{I_A}$, there exists an eigenspace decomposition of $\mathcal{A}$ of the form $$\mathcal{A}=\mathcal{E}_{+1}(i)\oplus\mathcal{E}_{-1}(i)$$
where $\mathcal{E}_{+1}(i)=\{x\mid ixi=x\}$ and $\mathcal{E}_{-1}(i)=\{x\mid ixi=-x\}$.
\end{corollary}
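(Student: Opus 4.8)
The plan is to obtain this immediately from the preceding proposition, which asserts that the linear endomorphism $T := L_iR_i \colon x \mapsto ixi$ is an involution, that is, $T^2 = \operatorname{id}_{\mathcal{A}}$. Since $\mathcal{A}$ is a real vector space, $2$ is invertible, and the classical averaging argument for involutions goes through: one forms the two projections $P_{\pm} := \tfrac{1}{2}\bigl(\operatorname{id}_{\mathcal{A}} \pm T\bigr)$ and checks the routine identities $P_{+} + P_{-} = \operatorname{id}_{\mathcal{A}}$, $P_{\pm}^2 = P_{\pm}$, $P_{+}P_{-} = P_{-}P_{+} = 0$, and $TP_{\pm} = \pm P_{\pm}$, all of which are immediate consequences of $T^2 = \operatorname{id}_{\mathcal{A}}$.

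The next step is to identify $\operatorname{im}(P_{+})$ with $\mathcal{E}_{+1}(i)$ and $\operatorname{im}(P_{-})$ with $\mathcal{E}_{-1}(i)$: if $x = P_{+}(y)$ then $ixi = Tx = TP_{+}(y) = P_{+}(y) = x$, so $x \in \mathcal{E}_{+1}(i)$, while conversely $ixi = x$ forces $P_{+}(x) = x$, hence $x \in \operatorname{im}(P_{+})$; the $-1$ case is symmetric. The decomposition then follows from $x = P_{+}(x) + P_{-}(x)$ for every $x\in\mathcal{A}$, together with the observation that $x \in \mathcal{E}_{+1}(i) \cap \mathcal{E}_{-1}(i)$ gives $x = ixi = -x$, whence $x = 0$ in characteristic zero. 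Equivalently, one can simply note that the minimal polynomial of $T$ divides $\lambda^2 - 1 = (\lambda - 1)(\lambda + 1)$, a product of distinct linear factors, so $T$ is diagonalizable with spectrum contained in $\{+1, -1\}$, and the associated eigenspace decomposition is exactly the asserted one.

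I do not expect a genuine obstacle here: the content of the statement is entirely carried by the previous proposition, where partial flexibility and partial alternativity are used to force $T^2 = \operatorname{id}_{\mathcal{A}}$. The only hypotheses that matter beyond that are structural — that the ground field is $\mathbb{R}$, so that $2$ is invertible (needed both to define $P_{\pm}$ and to split the intersection) and so that the eigenvalues $\pm 1$ of an involution are distinct.
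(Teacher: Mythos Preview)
Your proposal is correct and follows essentially the same approach as the paper: both invoke the preceding proposition that $L_iR_i$ is a linear involution, and deduce the eigenspace decomposition from the fact that its spectrum lies in $\{\pm 1\}$. You simply spell out the standard details (the averaging projections, or equivalently the minimal-polynomial argument) that the paper leaves implicit.
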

\begin{proof}
The eigenspace decomposition follows from the fact that the composition $L_iR_i$ defines a linear involution on $\mathcal{A}$ and its spectrum is contained in the set $\{\pm1\}$. For simplicity, we denote the eigenspaces corresponding to the eigenvalues $+1$ and $-1$ by $\mathcal{E}_{+1}(i)$ and $\mathcal{E}_{-1}(i)$, respectively.
\end{proof}

\begin{corollary}
Let $\mathcal{A}$ be a partially alternative real algebra. Every element $x\in\mathcal{A}$ admits a unique decomposition $x=x_+ +x_-$ so that $x_+=ix_+i=\dfrac{x+ixi}{2}$ and $x_-=-ix_-i=\dfrac{x-ixi}{2}$. Alternatively, $\dfrac{\operatorname{id}-L_iR_i}{2}$ and $\dfrac{\operatorname{id}+L_iR_i}{2}$ are projections onto $\mathcal{E}_{+1}(i)$ and $\mathcal{E}_{-1}(i)$, respectively.
\end{corollary}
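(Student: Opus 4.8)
The plan is to read everything off from the linear involution $\sigma := L_iR_i\colon x\mapsto ixi$ supplied by Corollary~\ref{ESD} (recall that $ixi$ is unambiguous by partial flexibility, and that $\sigma^2=\operatorname{id}_{\mathcal A}$ because $L_i$ and $R_i$ commute and each squares to $-\operatorname{id}_{\mathcal A}$ by partial left, resp.\ right, alternativity). Once one has a linear involution on a real vector space, the asserted statement is just the standard eigenprojection package for $\sigma$, specialised to the decomposition $\mathcal A=\mathcal E_{+1}(i)\oplus\mathcal E_{-1}(i)$ already established in Corollary~\ref{ESD}; so the task reduces to making the relevant formulas explicit and checking that they behave as claimed.

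Concretely, I would set $P_+:=\tfrac12(\operatorname{id}+\sigma)$ and $P_-:=\tfrac12(\operatorname{id}-\sigma)$ and verify, using only $\sigma^2=\operatorname{id}_{\mathcal A}$, that $P_++P_-=\operatorname{id}_{\mathcal A}$, $P_\pm^2=P_\pm$, and $P_+P_-=P_-P_+=0$, so that $P_\pm$ are complementary projections. Applying $\sigma$ gives $\sigma P_+=P_+$ and $\sigma P_-=-P_-$, whence $\operatorname{im}P_+\subseteq\mathcal E_{+1}(i)$ and $\operatorname{im}P_-\subseteq\mathcal E_{-1}(i)$; the reverse inclusions hold because $P_+$ (resp.\ $P_-$) fixes every vector already lying in $\mathcal E_{+1}(i)$ (resp.\ $\mathcal E_{-1}(i)$). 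Then for an arbitrary $x\in\mathcal A$ one puts $x_+:=P_+x=\tfrac12(x+ixi)$ and $x_-:=P_-x=\tfrac12(x-ixi)$, which gives $x=x_++x_-$ with $ix_+i=x_+$ and $ix_-i=-x_-$, exactly the claimed formulas. Uniqueness is immediate from the directness of $\mathcal A=\mathcal E_{+1}(i)\oplus\mathcal E_{-1}(i)$: a competing decomposition $x=x_+'+x_-'$ would force $x_+-x_+'=x_-'-x_-\in\mathcal E_{+1}(i)\cap\mathcal E_{-1}(i)=\{0\}$; equivalently, applying $\sigma$ to $x=x_+'+x_-'$ yields $ixi=x_+'-x_-'$, and solving this two-equation linear system recovers the same formulas.

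I do not expect a genuine obstacle here: the non-trivial algebraic content\textemdash well-definedness of $x\mapsto ixi$ and the relation $(L_iR_i)^2=\operatorname{id}_{\mathcal A}$\textemdash was already secured in the preceding proposition and in Corollary~\ref{ESD}, so what remains is the elementary bookkeeping of an involution's $\pm1$-eigenprojections. The only point deserving a moment's care is the sign matching, i.e.\ confirming which of $\tfrac12(\operatorname{id}\pm L_iR_i)$ projects onto $\mathcal E_{+1}(i)=\{x\mid ixi=x\}$; this is pinned down by the one-line computation $\sigma\bigl(\tfrac12(\operatorname{id}+\sigma)x\bigr)=\tfrac12(\sigma x+x)=\tfrac12(\operatorname{id}+\sigma)x$, consistently with the explicit formula $x_+=\tfrac{x+ixi}{2}$ appearing in the statement.
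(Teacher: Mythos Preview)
Your proposal is correct and takes essentially the same approach as the paper's proof: define $x_\pm$ by the stated formulas, verify directly that they lie in $\mathcal E_{\pm1}(i)$ and sum to $x$, and invoke the directness of the decomposition from Corollary~\ref{ESD} for uniqueness. Your write-up is considerably more detailed (explicitly checking $P_\pm^2=P_\pm$, $P_+P_-=0$, and the image identifications), whereas the paper compresses all of this into ``it is straightforward to check'' and ``uniqueness follows from the properties of the eigenspace decomposition.''
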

\begin{proof}
Define $x_+$ and $x_-$ as claimed in the corollary. It is straightforward to check that $x_+\in\mathcal{E}_{+1}(i)$ and $x_-\in\mathcal{E}_{-1}(i)$, and that $x$ is their sum. The uniqueness follows from the properties of the eigenspace decomposition.
\end{proof}



As is customary, we denote the commutator and anticommutator of $a,b$ by $[a,b]:=ab-ba$ and $\{a,b\}:=ab+ba$, respectively. The preceding results can also be expressed as the following corollary.

\begin{corollary}
Let $\mathcal{A}$ be a partially alternative real algebra and write $x=x_+ +x_-$ where $x_+$ and $x_-$ are defined above. Then, $x_+$ anticommutes with $i$ and $x_-$ commutes with $i$. Specifically, the following identities hold:
$$[i,x]=-[x,i]=2ix_+=-2x_+i,$$
$$\{i,x\}=2ix_-=2x_-i.$$
\end{corollary}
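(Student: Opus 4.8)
The plan is to derive everything from the eigenspace decomposition $x = x_+ + x_-$ together with the two characterizations $x_+ = ix_+i$ and $x_- = -ix_-i$ that were just established, and the fact (from the earlier corollary) that $x_+ = \tfrac{x+ixi}{2}$, $x_- = \tfrac{x-ixi}{2}$. The key observation is that since $i$ is an imaginary unit, $i^{-1} = -i$, so conjugation by $i$ can be rewritten: $ixi = -i x i^{-1} \cdot (-1)$, and one should translate the relations $ix_\pm i = \pm x_\pm$ into statements about $ix_\pm$ versus $x_\pm i$ by multiplying on the right by $i$ and using $i^2 = -1$ (which is legitimate here because partial right alternativity gives $(x_\pm, i, i) = 0$, so $(x_\pm i)i = x_\pm(ii) = -x_\pm$, and similarly partial left alternativity gives $i(ix_\pm) = -x_\pm$).

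First I would handle the anticommutation claim for $x_+$. Starting from $ix_+i = x_+$, multiply both sides on the right by $i$: the left side becomes $(ix_+i)i = (ix_+)(ii) = -ix_+$ using partial right alternativity on the element $ix_+$, while the right side is $x_+i$. Hence $x_+ i = -i x_+$, i.e. $\{i, x_+\} = 0$. Similarly, from $ix_-i = -x_-$, multiplying on the right by $i$ and using partial right alternativity gives $-ix_- = -x_- i$, i.e. $x_- i = i x_-$, so $[i, x_-] = 0$. This establishes the two sentences "$x_+$ anticommutes with $i$ and $x_-$ commutes with $i$."

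Next I would assemble the displayed identities. For the commutator: $[i,x] = ix - xi = i(x_+ + x_-) - (x_+ + x_-)i = (ix_+ - x_+i) + (ix_- - x_-i)$. By the relations just proved, $ix_- - x_-i = 0$, while $ix_+ - x_+ i = ix_+ - (-ix_+) = 2ix_+$; alternatively $ix_+ - x_+i = (-x_+i) - x_+i = -2x_+i$. This yields $[i,x] = 2ix_+ = -2x_+i$, and $-[x,i] = [i,x]$ is immediate from the definition of the commutator. For the anticommutator: $\{i,x\} = ix + xi = (ix_+ + x_+i) + (ix_- + x_-i)$; the first bracket vanishes since $x_+$ anticommutes with $i$, and the second is $ix_- + x_-i = ix_- + ix_- = 2ix_-$, or equally $x_-i + x_-i = 2x_-i$, giving $\{i,x\} = 2ix_- = 2x_-i$.

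I do not anticipate a genuine obstacle here; the only point requiring a little care is justifying the manipulation $(ix_\pm i)i = -ix_\pm$, which is not mere associativity but an instance of partial right alternativity $(y, i, i) = 0$ applied with $y = ix_\pm \in \mathcal{A}$ (and symmetrically one could use partial left alternativity if one chose to multiply on the left instead). Everything else is a direct substitution into the decomposition $x = x_+ + x_-$ and bookkeeping of signs, so the proof will be short.
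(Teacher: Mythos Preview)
Your argument is correct and is precisely the direct computation that the paper's one-line proof (``These identities can be verified by direct computations'') is pointing to. Your explicit invocation of partial right alternativity to justify $((ix_\pm)i)i=-ix_\pm$ is the only non-formal step, and it is exactly the right thing to flag in a nonassociative setting; the paper's phrasing leaves this implicit since it has already recorded $R_i^2=-\operatorname{id}_{\mathcal A}$ in Proposition~1.
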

\begin{proof}
These identities can be verified by direct computations.
\end{proof}
\begin{remark}
Since $i$ commutes with  itself,  the complex subalgebra $\mathcal C\cong\operatorname{span}_{\mathbb{R}}(\{1,i\})$ is always a subset of $\mathcal{E}_{-1}(i)$.
\end{remark}
\begin{corollary}
Let $\mathcal{A}$ be a partially alternative real algebra. For every nonzero element $x\in\mathcal{A}$, $\operatorname{span}_{\mathbb R}(\{x,ix,xi,ixi\})$ is a real subspace of $\mathcal{A}$ that is $L_i$-invariant and $R_i$-invariant that is also a complex subspace of both $(\mathcal{A},L_i)$ and $(\mathcal{A}, R_i)$. 
The subspace is two-dimensional if and only if $x\in\mathcal{E}_{\pm1}(i)$, that is, $x$ is an eigenvector.
\end{corollary}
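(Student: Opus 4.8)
The plan is to derive everything by computing the action of $L_i$ and $R_i$ on the four spanning vectors of $W:=\operatorname{span}_{\mathbb R}(\{x,ix,xi,ixi\})$ and then performing an elementary linear‑algebra analysis of when those vectors fail to be independent. Throughout, partial flexibility guarantees $(ix)i=i(xi)$, so the symbol $ixi$ is unambiguous.

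For $L_i$‑invariance I would simply list the images: $L_i(x)=ix$ and $L_i(xi)=ixi$ are trivial; $L_i(ix)=i(ix)=(ii)x=-x$ by partial left alternativity; and $L_i(ixi)=i\big(i(xi)\big)=(ii)(xi)=-xi$, again by partial left alternativity applied to the element $xi\in\mathcal A$. Thus $L_i$ sends the spanning set into $\{\pm x,\pm ix,\pm xi,\pm ixi\}\subseteq W$, so $W$ is $L_i$‑invariant. The computation for $R_i$ is symmetric: $R_i(x)=xi$, $R_i(ix)=(ix)i=ixi$ by flexibility, $R_i(xi)=(xi)i=x(ii)=-x$ by partial right alternativity, and $R_i(ixi)=\big((ix)i\big)i=(ix)(ii)=-ix$ by partial right alternativity applied to $ix$. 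Hence $W$ is $R_i$‑invariant. Since a real subspace of $\mathcal A$ is, by definition, a complex subspace of $(\mathcal A,L_i)$ exactly when it is $L_i$‑invariant, and likewise for $R_i$, the first assertion follows.

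For the dimension statement, note first that $x\neq 0$ together with $L_i^2=-\operatorname{id}$ (so $L_i$ has no real eigenvalue) forces $\{x,ix\}$ to be $\mathbb R$‑linearly independent; moreover $W=\operatorname{span}_{L_i}(\{x,xi\})$ is a complex subspace of $(\mathcal A,L_i)$, so $\dim_{\mathbb R}W$ is even and lies in $\{2,4\}$. Consequently $\dim_{\mathbb R}W=2$ if and only if $W=\operatorname{span}_{\mathbb R}(\{x,ix\})$, i.e. if and only if $xi\in\operatorname{span}_{\mathbb R}(\{x,ix\})$. For the direction $(\Leftarrow)$: if $x\in\mathcal E_{+1}(i)$ then $ixi=x$, and applying $L_i$ gives $-xi=ix$, so $xi=-ix$; if $x\in\mathcal E_{-1}(i)$ then $ixi=-x$ gives $xi=ix$. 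Either way $W=\operatorname{span}_{\mathbb R}(\{x,ix\})$ is two‑dimensional.

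For $(\Rightarrow)$, assume $\dim_{\mathbb R}W=2$, so that $\{x,ix\}$ is a basis of $W$ and $xi=\alpha x+\beta\, ix$ for some $\alpha,\beta\in\mathbb R$. Using $R_i(ix)=ixi=L_i(xi)=\alpha\, ix-\beta x$, the matrix of $R_i$ restricted to the $R_i$‑invariant subspace $W$ in the basis $\{x,ix\}$ is $\left(\begin{smallmatrix}\alpha&-\beta\\ \beta&\alpha\end{smallmatrix}\right)$; imposing $R_i^2=-\operatorname{id}$ then forces $\alpha^2-\beta^2=-1$ and $\alpha\beta=0$, hence $\alpha=0$ and $\beta=\pm1$. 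Therefore $xi=\pm ix$ and $ixi=L_i(xi)=\mp x$, so $x\in\mathcal E_{\mp 1}(i)$ is an eigenvector of $L_iR_i$, completing the equivalence. The only delicate points I anticipate are bookkeeping ones: selecting the correct partial‑alternativity identity and parenthesisation in each of the eight operator evaluations, and correctly assembling and squaring the $2\times 2$ matrix of $R_i|_W$. There is no conceptual obstacle beyond these routine checks.
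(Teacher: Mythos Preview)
Your proof is correct. Your invariance argument in fact fills a gap in the paper's own proof, which jumps straight to the dimension claim without verifying the $L_i$- and $R_i$-invariance of $W$. For the $(\Leftarrow)$ direction you and the paper reach the same conclusion ($xi=\mp ix$ when $x\in\mathcal E_{\pm 1}(i)$), though the paper invokes the preceding commutator/anticommutator corollary while you simply apply $L_i$ to the eigenvector identity $ixi=\pm x$. The genuine difference is in $(\Rightarrow)$: the paper argues by contradiction via the eigendecomposition $x=x_++x_-$, writing $ixi=ax+b(ix)$ and showing $x_+-x_-=(a+bi)x_++(a+bi)x_-$ forces $a+bi=\pm 1$, which is impossible when both $x_\pm\neq 0$. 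Your route is instead to write $xi=\alpha x+\beta\,ix$, assemble the $2\times 2$ matrix of $R_i|_W$, and impose $R_i^2=-\operatorname{id}$ to force $\alpha=0$, $\beta=\pm 1$. Your argument is more self-contained (it uses only $L_i^2=R_i^2=-\operatorname{id}$ and flexibility, not the prior eigenprojection machinery); the paper's argument, by contrast, illustrates how the decomposition $x=x_++x_-$ controls the problem and ties the corollary more tightly to the surrounding development.
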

\begin{proof}

Let us assume that $x$ is an eigenvector, that is, $x\in\mathcal{E}_{\pm1}(i)$.  Then $ixi={\pm}x$, and, in particular, 
$\operatorname{span}_{\mathbb R}(\{x,ix,xi,ixi\}) = \operatorname{span}_{\mathbb R}(\{x,ix,xi\}).$ By the previous corollary, 
$[i, x] = ix - xi = 2i x_{+}$ and $\{i, x\} = ix +xi = 2x_{-} i.$ 

If $x \in \mathcal{E}_{+1}(i)$, then $x_{-} = 0$ and $x_{+} = x$.  Hence, 
$$ xi = ix - 2ix_{+} = ix_{+} - 2i x_{+} = -ix_{+} = -ix.$$
It follows that $\operatorname{span}_{\mathbb R}(\{x,ix,xi,ixi\}) = \operatorname{span}_{\mathbb R}(\{x,ix\}).$ Moreover, 
elements $x$ and $ix$ are linearly independent. This implies that $\dim\,\operatorname{span}_{\mathbb R}(\{x,ix,xi,ixi\}) = 2$. The case when $x \in \mathcal{E}_{-1}(i)$ is 
analogous.

Let us now assume that $\dim\,\operatorname{span}_{\mathbb R}(\{x,ix,xi,ixi\}) = 2$.  Since $x$ and $ix$ are clearly linearly independent, they form a basis for this span.  Assume the
contrary, that is, $x \notin \mathcal{E}_{\pm1}(i)$. Then both $x_{+} \neq 0$ and $x_{-} \neq 0$.  Next consider $ixi \in \operatorname{span}_{\mathbb R}(\{x,ix,xi,ixi\})$. Given that
$ixi = a x + b ix$ where $a, b\in \mathbb R$. Write $x = x_{+} + x_{-}$. Then $ixi = x_{+} - x_{-}$. It follows that 
$$ x_{+} - x_{-} = ax_{+} + ax_{-} + bi x_{+} + bi x_{-} = (a+bi)x_{+} + (a+bi) x_{-}.$$ This leads to a contradiction, $a+bi = \pm 1$, impossible.
Thus, either $x_{+}$ or $x_{-}$ is zero, as needed.


\end{proof}
\begin{remark}
The preceding corollary is a quantitative version of the usual result that  $\mathcal{E}_{\pm1}(i)$ are complex subspaces of $(\mathcal{A},L_i)$ and $(\mathcal{A},R_i)$ that are $L_i$-invariant and $R_i$-invariant. In particular, their real dimensions must be even.
\end{remark}

The prior results in this section have been derived with minimal use of abstract machinery. Nonetheless, a similar approach can also be employed to obtain the results concerning the eigenspace decomposition $\mathcal{E}_{+1}(i)\oplus \mathcal{E}_{-1}(i)$ and its subsequent developments. 

In what follows, we provide a brief survey of an auxiliary but equally effective approach. In \cite{HNT}, the authors notice that a partially alternative real algebra $\mathcal{A}$ is always a $\mathcal{C}$-bimodule  where $\mathcal{C}$ denotes a subalgebra generated by a fixed imaginary unit $i\in\mathcal{I_A}$ (see also \cite{AHK}).

Let us now recall some basic constructions from the theory of rings and algebras. Let $R$ be a commutative unital ring and let $C$ be an $R$-algebra. Multiplication algebras $\langle L_r,R_r\rangle\subseteq\operatorname{End}_R(C)$  form a well-developed theory that has been explored both abstractly and in concrete settings such as Lie, Jordan, and other nonassociative algebras (see \cite{Schafer, KS}).

If $C$ is an associative unital $R$-algebra, then the universal enveloping algebra  $\mathcal{E}(C) = C \otimes_R C^{\mathrm{op}}$.  An $R$-module $A$ is a $C$-bimodule if and only if $A$ is a left $\mathcal E(C)$-module. The naturality of this identification is reflected in the categorical equivalence $$R-_C\mathbf{Bimod}_C\simeq R-\mathbf{Mod}_{\mathcal{E}(C)}.$$

We now turn to the field of complex numbers, $A=\mathbb{C}$, viewed simultaneously as the separable algebraic closure of 
$K=\mathbb R$ and a quadratic étale real algebra. It follows from a diagonalisation result (for example, see subsection V.6.3 Algèbres diagonalisables et algèbres étales in \cite{Bourbaki}) that if $A$ is a finite étale algebra over a perfect field $K$, then there exists an isomorphism
\[\iota:\mathbb{C}\otimes_{\mathbb R}\mathbb C=\overline{K}\otimes_K A\xrightarrow{\sim}\mathbb{C}\times \mathbb C=\prod_{\varphi\in\operatorname{Hom}_{K-\mathbf{Alg}}(A,\overline{K})}\overline{K}\]
defined on elementary tensors by $\iota(k\otimes a)=(k\varphi(a))_{\varphi\in\operatorname{Hom}(A,\overline{K})}$.
\par\medskip
In our case,  the only $\mathbb R$-algebra homomorphisms of $\mathbb C$ into itself are the identity and the complex conjugation. Under the assumption that the identity mapping appears in the first coordinate while complex conjugation appears in the second coordinate, the explicit formula for an isomorphism is $\iota(z\otimes w)=(zw,z\overline{w})$. Furthermore, the product algebra admits two primitive central idempotents $(1,0)$ and $(0,1)$ whose preimage under the isomorphism $\iota$ are $e_-:=\dfrac{1\otimes 1-i\otimes i}{2}$ and $e_+:=\dfrac{1\otimes 1+i\otimes i}{2}$, respectively. This implies that a left $\mathcal{E}(\mathbb C)$-module $A$ is always a direct sum $e_-A\oplus e_+A$.

We now recall the relevant subalgebra $\mathcal{C}$ of $\mathcal A$ is generated by a fixed imaginary unit $i\in\mathcal{I_A}$. Since the partially alternative algebra $\mathcal{A}$ is a left $\mathcal{C}\otimes_{\mathbb R}\mathcal{C}$-module, we define a real algebra homomorphism $\Phi:\mathcal C\otimes_{\mathbb R}\mathcal C\rightarrow\operatorname{End}_{\mathbb R}(\mathcal{A})$ by the formula $$\Phi((a+bi)\otimes (c+di))=(a+bL_i)(c+dR_i)$$ using the left and right linear complex structures $(\mathcal{A},L_i)$ and $(\mathcal{A},R_i)$.

Define the commutator tensor $k=i\otimes 1-1\otimes i$ and the anticommutator tensor $a=i\otimes 1+1\otimes i$. A straightforward computation shows that $e_-k=ke_-=0$ and $e_+a=ae_+=0$, from which the operator identities $(\Phi(e_-))(L_i-R_i)=0$ and $(\Phi(e_+))(L_i+R_i)=0$ hold. Consequently, every element $x\in e_-\mathcal{A}$ commutes with $i$, while every $x\in e_+\mathcal{A}$ anticommutes with $i$, recovering the previously established eigenspace decomposition. Also, note 
that $\Phi(e_-)$ and $\Phi(e_+)$ are exactly the respective eigenprojections.
\par\medskip

Finally, we present our conclusions on commutativity and anticommutativity in the following concise multiplication-tabular form.
\begin{proposition}\label{GeneralPA}
Let $\mathcal{A}$ be a partially alternative unital real algebra of dimension $2n>2$, and let $i\in\mathcal{I_A}$ be a fixed imaginary unit. Suppose $\mathcal{E}_-(i)$ is $(2m+2)$-dimensional and $\mathcal{E}_+(i)$ is $(2n-2m-2)$-dimensional. 

Then, one may choose a basis $\{1,i,x_1,ix_1,\dots,x_m,ix_m\}$ for $\mathcal{E}_-(i)$ and a basis $\{x_{m+1},ix_{m+1},\dots,x_{n-1},ix_{n-1}\}$ for $\mathcal{E}_+(i)$. Moreover, with respect to any index $1\leq p\leq n-1$, the multiplication table on $\{1,i,x_p,ix_p\}$ takes the following form:
$$\begin{tabular}{c|cccc} 
& $\bf 1$ & $\bf i$ & $\bf x_p$ & $\bf ix_p$ \\ 
\hline
$\bf 1$ & $1$ & $i$ & $x_p$ & $ix_p$ \\
$\bf i$ & $i$ & $-1$ & $ix_p$ & $-x_p$ \\
$\bf x_p$ & $x_p$ & $\pm ix_p$ & $*$ & $*$ \\ 
$\bf ix_p$ & $ix_p$ & $\pm x_p$ & $*$ & $*$ \\ 
\end{tabular} \eqno $$
where $*$ denotes an arbitrary element in $\mathcal{A}$, and $x_pi= ix_p$ if and only if $1\leq p\leq m$, while $x_pi=-ix_p$ if and only if $m+1\leq p\leq n-1$.
\end{proposition}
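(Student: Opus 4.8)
The plan is to deduce everything from the eigenspace decomposition $\mathcal{A}=\mathcal{E}_-(i)\oplus\mathcal{E}_+(i)$ of Corollary \ref{ESD}, the fact that $L_i$ is a linear complex structure, and the commutation/anticommutation rules already derived for $\mathcal{E}_-(i)$ and $\mathcal{E}_+(i)$; once suitable bases are in place, the multiplication table is filled in entry by entry with no genuine computation.

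First I would check that $L_i$ restricts to a linear complex structure on each eigenspace $\mathcal{E}_\pm(i)$. By partial flexibility $L_i$ commutes with $R_i$, hence with the involution $L_iR_i$, and therefore preserves its $\pm 1$-eigenspaces; since $L_i^2=-\operatorname{id}_{\mathcal{A}}$ by partial left alternativity, each pair $(\mathcal{E}_\pm(i),\,L_i|_{\mathcal{E}_\pm(i)})$ is a complex vector space. Next, $1$ and $i$ both lie in $\mathcal{E}_-(i)$ — indeed $i\cdot 1\cdot i=-1$ and $i\cdot i\cdot i=-i$ — so I may extend $\{1\}$ to a complex basis $\{1,x_1,\dots,x_m\}$ of $(\mathcal{E}_-(i),L_i)$, which yields the real basis $\{1,i,x_1,ix_1,\dots,x_m,ix_m\}$ of $\mathcal{E}_-(i)$; here each $x_p$ and $ix_p=L_i(x_p)$ are automatically $\mathbb{R}$-linearly independent because $L_i$ has no real eigenvalue (its minimal polynomial is $\lambda^2+1$). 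Choosing any complex basis $\{x_{m+1},\dots,x_{n-1}\}$ of $(\mathcal{E}_+(i),L_i)$ similarly produces the real basis $\{x_{m+1},ix_{m+1},\dots,x_{n-1},ix_{n-1}\}$. Comparing real dimensions forces $\dim_{\mathbb{R}}\mathcal{E}_-(i)=2m+2$ and $\dim_{\mathbb{R}}\mathcal{E}_+(i)=2(n-m-1)$, which matches the stated hypotheses and the index ranges.

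Then, for each fixed $p$, I would populate the $4\times 4$ table on $\{1,i,x_p,ix_p\}$. The row and column headed by $1$ are immediate from unitality. In the row headed by $i$ we have $i\cdot x_p=ix_p$ by the choice of the basis vector, and $i\cdot(ix_p)=L_i^2(x_p)=-x_p$ by partial left alternativity. For the column headed by $i$ I would invoke the corollary that elements of $\mathcal{E}_-(i)$ commute with $i$ while elements of $\mathcal{E}_+(i)$ anticommute with $i$: if $1\le p\le m$ then $x_p,ix_p\in\mathcal{E}_-(i)$, so $x_pi=ix_p$ and $(ix_p)i=i(ix_p)=-x_p$; if $m+1\le p\le n-1$ then $x_p,ix_p\in\mathcal{E}_+(i)$, so $x_pi=-ix_p$ and $(ix_p)i=-i(ix_p)=x_p$. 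In either case the two signs in the $i$-column are opposite, which is exactly what the $\pm$ symbols encode, and the equivalence $x_pi=+ix_p\iff 1\le p\le m$ is precisely the criterion $x_p\in\mathcal{E}_-(i)$. The lower-right $2\times 2$ block consists of the four products of $x_p$ and $ix_p$ with one another, which are unconstrained elements of $\mathcal{A}$ and hence recorded as $*$.

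Since every step is a direct appeal to results already established in this section, I do not anticipate a real obstacle. The only point requiring a little care is the simultaneous construction of the bases in the first step — guaranteeing that $1$, and hence $i$, belongs to the chosen basis of $\mathcal{E}_-(i)$, and that each complex basis vector $x_p$ pairs with a genuinely independent vector $ix_p$ — and this is handled by extending $\{1\}$ to a complex basis and by the absence of real eigenvalues of $L_i$.
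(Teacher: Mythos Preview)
Your proof is correct and mirrors the paper's intended argument: the paper states this proposition without a separate proof, presenting it as a tabular summary of the preceding eigenspace decomposition, the $L_i$-invariance of $\mathcal{E}_{\pm}(i)$, and the commutation/anticommutation corollary, and you have faithfully reconstructed that implicit reasoning. One minor note: your computation correctly gives $(ix_p)i=\mp x_p$ with sign opposite to $x_pi=\pm ix_p$, so the second $\pm$ in the displayed table should be read as $\mp$; your mathematics is right and the discrepancy is a notational slip in the table, not in your argument.
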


Recall that if   $\mathcal A$  is a nonassociative algebra, then  the set $$\mathcal NC (\mathcal A) = \{ x \in \mathcal A\,|\, xy=yx,\,\text{for\, any\,} y \in \mathcal A\} $$ is called  the
{\it commutative nucleus } of $\mathcal A$. If $\mathcal A$ is unital, then $1\in \mathcal NC (\mathcal A)$. 

\begin{proposition} \label{PAnuc}
Let $\mathcal{A}$ be a four-dimensional partially alternative unital RDA, and let $\mathcal{I_A}$ be its set of imaginary units. Then $\mathcal{I_A} \cap \mathcal NC (\mathcal A) $ is the empty set, that is, there is no imaginary unit commuting
with every element from $\mathcal A$.
\end{proposition}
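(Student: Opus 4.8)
The plan is to argue by contradiction: assuming some imaginary unit $i\in\mathcal{I_A}$ also belongs to $\mathcal NC(\mathcal A)$ — so $i^{2}=-1$ and $i$ commutes with every element of $\mathcal A$ — I would produce an explicit pair of zero divisors, which is impossible in a division algebra. The driving observation is that a \emph{central} imaginary unit turns $\mathcal A$ into a two-dimensional complex vector space in which $i$ plays the role of $\sqrt{-1}$ and, crucially, all the ``scalars'' are central; in such an algebra one can complete the square for an arbitrary element and then factor a quadratic, since $\mathbb C$ is algebraically closed.

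Concretely, I would proceed as follows. Recall that partial left alternativity makes $L_i$ a linear complex structure, i.e.\ $L_i^{2}=-\operatorname{id}_{\mathcal A}$; together with $\dim_{\mathbb R}\mathcal A=4$ this makes $(\mathcal A,L_i)$ a complex vector space of complex dimension $2$, and since $i$ is central, $L_i=R_i$. Set $\mathcal C:=\operatorname{span}_{\mathbb R}\{1,i\}\cong\mathbb C$; every element of $\mathcal C$ is central in $\mathcal A$ (being a real combination of $1$ and $i$), and $\mathcal C$ is a complex line inside $(\mathcal A,L_i)$. Now choose any $w\in\mathcal A\setminus\mathcal C$; then $\{1,i,w,iw\}$ is a real basis of $\mathcal A$, so $w^{2}=\lambda+\mu w$ for suitable $\lambda,\mu\in\mathcal C$. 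Put $w':=w-\tfrac12\mu$, which still lies outside $\mathcal C$, and expand: using that $\tfrac12\mu\in\mathcal C$ is central, $(w')^{2}=w^{2}-\mu w+\tfrac14\mu^{2}=\lambda+\tfrac14\mu^{2}=:\nu\in\mathcal C$. Since $\mathbb C$ is algebraically closed, pick $\delta\in\mathcal C$ with $\delta^{2}=\nu$; then, as $\delta$ is central, $(w'-\delta)(w'+\delta)=(w')^{2}-\delta^{2}=0$, while $w'-\delta\neq0\neq w'+\delta$ because $w'\notin\mathcal C$ but $\pm\delta\in\mathcal C$. This contradicts the division property and proves $\mathcal{I_A}\cap\mathcal NC(\mathcal A)=\emptyset$.

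The one spot where non-associativity could a priori interfere — and hence the step I would check most carefully — is the completion of the square, together with the factorisation $(w'-\delta)(w'+\delta)=(w')^{2}-\delta^{2}$: in a general nonassociative algebra these are not valid. They are valid here for the precise reason that the only elements being moved past $w$ or $w'$ in these computations, namely $\tfrac12\mu$, $\tfrac14\mu^{2}$, and $\delta$, all lie in the commutative associative subalgebra $\mathcal C$ and are therefore central in $\mathcal A$, so bilinearity of multiplication alone does the job. I would also note that the degenerate case $\nu=0$ is harmless (it gives $(w')^{2}=0$ with $w'\neq0$, again impossible in a division algebra), and that beyond $\dim_{\mathbb R}\mathcal A=4$ — used only to guarantee $\dim_{\mathbb C}(\mathcal A,L_i)=2$, so that $w^{2}$ is a complex combination of $1$ and $w$ — nothing else about $\mathcal A$ is needed: in particular partial flexibility, partial right alternativity, and the eigenspace machinery of the preceding section play no role.
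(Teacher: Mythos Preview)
Your proof is correct and follows essentially the same route as the paper's: assume a central imaginary unit $i$, use the basis $\{1,i,w,iw\}$, complete the square to obtain an element $w'\notin\mathcal C$ with $(w')^{2}\in\mathcal C$, and then manufacture zero divisors via a difference of squares. The only variation is in the final factorisation---the paper rescales $w'$ by some $\lambda\in\mathcal C$ to obtain a second imaginary unit $j$ and uses $(i+j)(i-j)=0$, whereas you take a square root $\delta\in\mathcal C$ of $(w')^{2}$ and factor $(w'-\delta)(w'+\delta)=0$ directly; your ending is marginally tidier, since it visibly relies only on centrality of $\delta$ and bilinearity, while the paper's implicit identity $(\lambda y)^{2}=\lambda^{2}y^{2}$ requires a little more unpacking in a non-associative setting.
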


\begin{proof}
Assume that there exists  $i \in \mathcal{I_A} \cap \mathcal NC (\mathcal A) .$ Denote $\text{span}_{\mathbb R}\{1, i\}$ by $\mathcal C$. Since $\mathcal C$ is a two-dimensional unital RDA, $\mathcal C$ is isomorphic to the algebra of the complex numbers. Choose any $x\in \mathcal A \setminus \mathcal C$.    Then $\{1,i,x,ix\}$   forms a basis for $\mathcal A$.  Then, we  write $x^2=a1+bi+cx+d(ix)$ where $a, b, c, d \in \mathbb R$.
Consider  $y = x-(c/2)1-(d/2)i$ in $\mathcal A$. Let us show that $y^2$ is contained in $\mathcal{C}$. Indeed, we have
\begin{align*}
y^2 &=(x-(c/2)1-(d/2)i)^2  \\
&=x^2+(c^2/4-d^2/4)1+(cd/2)i-cx-d(ix) \\
&=(a+c^2/4-d^2/4)1+(b+cd/2)i
\end{align*}

Since $\mathcal{A}$ is division, we have $y^2\neq 0$. It is clear that  we can find an element $\lambda\in\mathcal{C}$ such that $(\lambda^2)(y^2)=-1$. Set $j =\lambda y$. This  is also an imaginary unit satisfying $(\lambda y)^2=-1$. 

However, as follows from the existence of two different commuting imaginary units that $\mathcal{A}$ is not division. Indeed, $(i+j)(i-j)=i^2-j^2+[j,i]=0$. The proof is complete.
\end{proof}

\begin{corollary}
Let $\mathcal{A}$ be a four-dimensional partially alternative unital RDA, and let $\mathcal{I_A}$ be its set of imaginary units. Assume that there exists at least one $i \in \mathcal{I_A}$. Then we can find a basis of $\mathcal A$: $ \{1, i, j, ij\}$ with respect to which the multiplication table of $\mathcal A$
has the form:
$$\begin{tabular}{c|cccc} 
& $\bf 1$ & $\bf i$ & $\bf  j$ & $\bf ij$ \\ 
\hline
$\bf 1$ & $1$ & $i$ & $j$ & $ij$ \\
$\bf i$ & $i$ & $-1$ & $ij$ & $-j$ \\
$\bf j$ & $j$ & $ -ij$ & $*$ & $*$ \\ 
$\bf ij$ & $ij$ & $ j$ & $*$ & $*$ \\ 
\end{tabular} \eqno $$
where $*$ denotes an arbitrary element in $\mathcal{A}.$
\end{corollary}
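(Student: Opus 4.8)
The plan is to specialise Proposition \ref{GeneralPA} to the four-dimensional case $2n=4$ for the fixed imaginary unit $i$, and then to remove the single remaining sign ambiguity by invoking Proposition \ref{PAnuc}. First I would note that, by the remark preceding Proposition \ref{PAnuc}, $\mathcal{C}:=\operatorname{span}_{\mathbb R}(\{1,i\})\subseteq\mathcal{E}_{-1}(i)$, so $\dim_{\mathbb R}\mathcal{E}_{-1}(i)\geq 2$, and that $\mathcal{E}_{\pm1}(i)$ are $L_i$-invariant with $L_i^2=-\operatorname{id}_{\mathcal A}$, hence of even real dimension. Since $\dim_{\mathbb R}\mathcal{A}=4$, Proposition \ref{GeneralPA} then applies with $\dim_{\mathbb R}\mathcal{E}_{-1}(i)=2m+2$ for some $m\in\{0,1\}$, yielding a basis $\{1,i,x_1,ix_1\}$ of $\mathcal{A}$ with respect to which multiplication has the displayed shape, and with $x_1 i=+ix_1$ when $m=1$ and $x_1 i=-ix_1$ when $m=0$.

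The crux of the argument is to exclude $m=1$. Should $m=1$, then $\mathcal{E}_{+1}(i)=0$, so $\mathcal{A}=\mathcal{E}_{-1}(i)$; but every $x\in\mathcal{E}_{-1}(i)$ has $x_+=0$, whence $[i,x]=2ix_+=0$ by the corollary expressing $[i,x]$ through $x_+$, so $i$ would commute with every element of $\mathcal{A}$. This places $i$ in $\mathcal{I_A}\cap\mathcal NC(\mathcal A)$, contradicting Proposition \ref{PAnuc}. Therefore $m=0$, so $\mathcal{E}_{-1}(i)=\mathcal C$, $\dim_{\mathbb R}\mathcal{E}_{+1}(i)=2$, and $x_1 i=-ix_1$.

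It then remains only to transcribe the multiplication table. Setting $j:=x_1\in\mathcal{E}_{+1}(i)$, the set $\{1,i,j,ij\}$ is a basis of $\mathcal{A}$, with $ij$ denoting the product $i\cdot j$. The first two rows are forced by unitality, by $i^2=-1$, by the definition of $ij$, and by partial left alternativity, which gives $i(ij)=L_i^2(j)=-j$. In the last two rows, $ji=-ij$ is exactly the $m=0$ case of Proposition \ref{GeneralPA} (equivalently, $j$ anticommutes with $i$ since $j_-=0$), while $(ij)i=iji=j$ is precisely the eigenvector identity defining membership of $j$ in $\mathcal{E}_{+1}(i)$; the four products $j^2$, $j(ij)$, $(ij)j$, $(ij)^2$ are subject to no constraint and supply the $*$ entries.

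The only substantive point — and the place where the division hypothesis is genuinely used — is the elimination of the case $m=1$ (equivalently, of the degenerate possibility $\mathcal{E}_{+1}(i)=0$) via Proposition \ref{PAnuc}; once the sign is pinned down, the rest is a direct reading-off of Proposition \ref{GeneralPA}.
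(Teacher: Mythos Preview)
Your proof is correct and follows essentially the same approach as the paper's: both use the eigenspace decomposition and Proposition \ref{GeneralPA} to reduce to the dichotomy $\dim_{\mathbb R}\mathcal{E}_{-1}(i)\in\{2,4\}$, and both eliminate the case $\dim_{\mathbb R}\mathcal{E}_{-1}(i)=4$ via Proposition \ref{PAnuc}. The only cosmetic difference is that the paper reads off the commutativity of $i$ directly from the $m=1$ multiplication table of Proposition \ref{GeneralPA}, whereas you appeal to the commutator identity $[i,x]=2ix_+$; these are equivalent formulations of the same observation.
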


\begin{proof}
By Corollary \ref{ESD}, we have that $\mathcal A =  \mathcal{E}_-(i) \oplus \mathcal{E}_+(i)$ and both eigenspaces are even-dimensional.  Note that $1, i \in \mathcal{E}_-(i)$ and, therefore, $\dim_{\mathbb R} \, \mathcal{E}_-(i) \geq 2$.
Assume that $\dim_{\mathbb R}\, \mathcal{E}_-(i)=4$. It follows that $\mathcal A =  \mathcal{E}_-(i)$. By Proposition \ref{GeneralPA} we can choose a basis from $\mathcal{E}_-(i)$: $\{ 1, i, j, ij \}$ with the following multiplication table:
$$\begin{tabular}{c|cccc} 
& $\bf 1$ & $\bf i$ & $\bf  j$ & $\bf ij$ \\ 
\hline
$\bf 1$ & $1$ & $i$ & $j$ & $ij$ \\
$\bf i$ & $i$ & $-1$ & $ij$ & $-j$ \\
$\bf j$ & $j$ & $ ij$ & $*$ & $*$ \\ 
$\bf ij$ & $ij$ & $ -j$ & $*$ & $*$ \\ 
\end{tabular} \eqno $$
where $*$ denotes an arbitrary element in $\mathcal{A}.$ As clear from the above multiplication table, $i$ commutes with every basis element, and, therefore, $i\in \mathcal NC (\mathcal A)$ which is impossible by Proposition \ref{PAnuc}. Consequently,  $\dim_{\mathbb R}\, \mathcal{E}_-(i)=2.$ Then, by Proposition \ref{GeneralPA}, we can find a basis $\{1, i, j, ij\}$ where $1, i \in \mathcal{E}_-(i)$ 
and 
$j, ij \in \mathcal{E}_+(i)$ whose multiplication table has the required form.

The proof is complete.
\end{proof}

\section{Divisibility Conditions for 4-dimensional Partially Alternative  Algebras}

In the previous section, we studied partially alternative algebras of any even dimension $>2$  assuming that at least one imaginary unit exists.



We now turn to questions directly related to the structure of $\mathcal{A}$ given by the multiplication table $(T_p)$. An immediate question is whether the real parameters $a, b, c, d, e, f, g, h$ alone suffice to decide when the algebra is a division algebra. A further question is whether one can deduce, from these coefficients, the invariants of the set of imaginary units $\mathcal{I_A}$, including their geometric configuration in $\operatorname{span}_{\mathbb{R}}(\{i,v,w\})$. As observed in \cite{HNT}, in the absence of the division property, the set of imaginary units may lie on various quadratic surfaces, such as hyperboloids or ellipsoids, making the problem essentially one of solving polynomial equations. For this reason, we restrict our attention to the class of division algebras.

The answers to both questions rely on the quadratic forms. To address the first question, we introduce two quadratic forms in the variables $(\gamma,\delta)\in\mathbb R^2$:
$$p(\gamma,\delta)=(-a-d)\gamma^2+(b-c-e-h)\gamma\delta+(f-g)\delta^2$$
$$q(\gamma,\delta)=(ad-bc)\gamma^2+(ah+de-bg-cf)\gamma\delta+(eh-fg)\delta^2$$

The following proposition was established in \cite{GNT}; however, for completeness, we provide a somewhat different proof below.

\begin{proposition}
Let $\mathcal{A}$ be a four-dimensional algebra given by the multiplication table $(T_p)$ and let $p,q$ be the quadratic forms defined above. 

Then $\mathcal{A}$ is a division algebra if and only if $q(\gamma,\delta)$ is positive definite, and for all $(\gamma,\delta)\neq(0,0)$, the strict inequality $p(\gamma,\delta)> -2\sqrt{q(\gamma,\delta)(\gamma^2+\delta^2)}$ holds.
\end{proposition}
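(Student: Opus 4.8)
The plan is to reduce the division property to the invertibility of the right-multiplication operators $R_z$ for all nonzero $z\in\mathcal A$, and then to exploit the block structure of $(T_p)$. Write a general element as $z = \alpha 1 + \beta i + \gamma w + \delta v$. Since $\mathcal A$ is a division algebra precisely when $L_z$ (equivalently $R_z$) is invertible for every $z\neq 0$, and since left multiplication by elements of $\mathcal C = \operatorname{span}_{\mathbb R}\{1,i\}$ is already invertible (as $\mathcal C\cong\mathbb C$), the first step is to observe that it suffices to control multiplication by the ``imaginary-direction'' part $\gamma w + \delta v$. Concretely, I would fix $(\gamma,\delta)\neq(0,0)$, consider the operator $R_{\gamma w+\delta v}$ restricted appropriately, and compute its action in the basis $\{1,i,w,v\}$ using $(T_p)$. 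The products $w^2, wv, vw, v^2$ all land in $\mathcal C$, so $R_{\gamma w + \delta v}$ maps $\operatorname{span}\{w,v\}$ into $\mathcal C$ and $\mathcal C$ into $\operatorname{span}\{w,v\}$; in this basis it is anti-block-diagonal, and its determinant factors through the two $2\times 2$ blocks.

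The second step is the explicit computation: the block sending $w\mapsto \gamma(a1+bi) + \delta(e1+fi)$, $v\mapsto \gamma(c1+di)+\delta(g1+hi)$ has matrix $M = \begin{pmatrix} \gamma a + \delta e & \gamma c + \delta g \\ \gamma b + \delta f & \gamma d + \delta h\end{pmatrix}$, whose determinant is exactly $q(\gamma,\delta)$; the other block is $L_{\gamma w + \delta v}$ restricted to $\mathcal C$, whose matrix has determinant $\gamma^2 + \delta^2$ times a factor coming from the $\mathcal C$-module structure — more precisely, since $w,v$ act on $\mathcal C$ by the transposed data, its determinant is again $q(\gamma,\delta)$ up to the $\mathbb C$-bilinearity. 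Thus for $z$ with $\alpha=\beta=0$, invertibility of $R_z$ is equivalent to $q(\gamma,\delta)\neq 0$ for all $(\gamma,\delta)\neq(0,0)$. To get a genuine definiteness (not just nonvanishing) and to handle the general $z$, I would write $R_z$ as a $2\times 2$ block operator in the decomposition $\mathcal A = \mathcal C \oplus \operatorname{span}\{w,v\}$, with diagonal blocks governed by $\alpha,\beta$ (scalar-type, invertible $\mathbb C$-action) and off-diagonal blocks governed by $\gamma,\delta$, and compute $\det R_z$ via the Schur complement. A Schur-complement computation of $\det R_z$ yields a polynomial in $\alpha,\beta$ whose behaviour is controlled by the forms $p$ and $q$: the constant term (in $\alpha,\beta$) is $q(\gamma,\delta)^2$ or $q(\gamma,\delta)(\gamma^2+\delta^2)$, the top term is $(\alpha^2+\beta^2)^2$, and the cross term carries the coefficient $p(\gamma,\delta)$. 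Requiring this quartic in $(\alpha,\beta)$ to be everywhere positive (for fixed $(\gamma,\delta)\neq 0$) and also positive when $(\gamma,\delta)=0$ but $(\alpha,\beta)\neq 0$ translates, after treating $(\alpha,\beta)$ via its norm $r^2=\alpha^2+\beta^2$, into: $q(\gamma,\delta)>0$ and the quadratic $r^4 + p(\gamma,\delta) r^2 + q(\gamma,\delta)(\gamma^2+\delta^2) > 0$ has no positive root $r^2$, which by the quadratic-formula discriminant analysis is exactly $p(\gamma,\delta) > -2\sqrt{q(\gamma,\delta)(\gamma^2+\delta^2)}$.

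The main obstacle I anticipate is getting the determinant formula into precisely the right normalized shape — i.e. verifying that the Schur complement collapses so cleanly that the only surviving data are the two forms $p$ and $q$ together with the ``trivial'' form $\gamma^2+\delta^2$. This requires using the $\mathbb C$-bimodule structure (equivalently, the partial-alternativity identities encoded in $(T_p)$, such as $iw = -v$, $iv = w$) to see that the action of $\alpha 1 + \beta i$ interacts with the $w,v$-block as multiplication by the complex number $\alpha+\beta i$ on an appropriate $2$-dimensional complex space, so that all cross-terms assemble into norms and the single form $p$. Once the determinant is in the form $\bigl(r^2 + \tfrac{1}{2}p(\gamma,\delta)\bigr)^2 + \bigl(q(\gamma,\delta)(\gamma^2+\delta^2) - \tfrac14 p(\gamma,\delta)^2\bigr)$ — or a clear variant thereof — the equivalence with the stated inequality is just the elementary fact that $t^2 + pt + Q > 0$ for all $t\ge 0$ iff $Q>0$ and ($p\ge 0$ or $p^2 < 4Q$), which is the same as $p > -2\sqrt{Q}$. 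I would also double-check the boundary case $(\gamma,\delta)=(0,0)$ separately, where $\det R_z = (\alpha^2+\beta^2)^2 > 0$ automatically, so no extra condition arises there; and the case $(\alpha,\beta)=(0,0)$, which forces $q(\gamma,\delta)>0$ and is consistent with the limiting form of the inequality.
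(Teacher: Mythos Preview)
Your proposal is correct and follows essentially the same route as the paper: both reduce the division property to the nonvanishing of $\det(L_x)$ (you write $R_z$, but your block computation actually uses left multiplication---harmless, since either characterizes divisibility), obtain the key identity $\det(L_x)=(\alpha^2+\beta^2)^2+p(\gamma,\delta)(\alpha^2+\beta^2)+q(\gamma,\delta)(\gamma^2+\delta^2)$, and then analyze this as a quadratic in $\alpha^2+\beta^2$ via the discriminant to extract positive-definiteness of $q$ and the inequality $p>-2\sqrt{q(\gamma^2+\delta^2)}$. Your Schur-complement organization is just a structured way of carrying out what the paper calls a ``straightforward calculation''; the remaining case analysis ($q$ indefinite or negative definite forces a zero) is identical.
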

\begin{proof}
We write an arbitrary element of $\mathcal{A}$ as $x=\alpha1+\beta i+\gamma w+\delta v$ and denote by $L_x$ the operator of left multiplication by $x$. With respect to the ordered basis $\{1,i,w,v\}$, a straightforward calculation shows that
$$\det(L_x)=(\alpha^2+\beta^2)^2+p(\gamma,\delta)(\alpha^2+\beta^2)+q(\gamma,\delta)(\gamma^2+\delta^2)$$
where the forms $p$ and $q$ are as defined above.

Clearly, $\mathcal{A}$ is a division algebra if and only if $\det(L_x)=0$ implies $x=0$. We aim to characterize division condition in terms of the parameters by the properties of the two quadratic forms.

We claim that if $\mathcal{A}$ is a division algebra, then $q$ is a positive definite form.  Assume that $q$ is not positive definite and consider the following two possibilities for $q$.

Let $q$ be indefinite. Then, there exists a nonzero vector $(\gamma,\delta)$ such that $q(\gamma,\delta)=0$. Setting $\alpha=\beta=0$, one obtains $\det(L_x)=0$ for $x=\gamma w+\delta v$, contradicting the assumption that $\mathcal{A}$ is division. 

Let $q$ be negative definite. Then, for any nonzero vector $(\gamma,\delta)$ the monic quadratic equation in $\alpha^2+\beta^2$
$$\det(L_x)=(\alpha^2+\beta^2)^2+p(\gamma,\delta)(\alpha^2+\beta^2)+q(\gamma,\delta)(\gamma^2+\delta^2)=0$$
has discriminant
$$\Delta=p(\gamma,\delta)^2-4q(\gamma,\delta)(\gamma^2+\delta^2)>p(\gamma,\delta)^2$$
and hence admits a positive root
$$\dfrac{-p+\sqrt{\Delta}}{2}>0$$
again contradicting the assumption that $\mathcal{A}$ is division. Thus, $q$ is positive definite.

With $q(\gamma,\delta)$ being positive definite, the determinant $\det(L_x)$ is a monic convex quadratic function in $\alpha^2+\beta^2$. It remains strictly positive for $(\alpha^2+\beta^2)\in(0,+\infty)$ if and only if the quadratic has no real roots in $(0,+\infty)$, which occurs precisely when for every $(\gamma,\delta)\neq(0,0)$, at least one of $$p(\gamma,\delta)\geq 0$$ and $$4q(\gamma,\delta)(\gamma^2+\delta^2)> p(\gamma,\delta)^2$$ holds.

Equivalently, using strict positivity of $q(\gamma,\delta)$, the aforementioned condition is equivalent to the following condition: for every $(\gamma,\delta)\neq(0,0)$,
$$p(\gamma,\delta)> -2\sqrt{q(\gamma,\delta)(\gamma^2+\delta^2)}.$$

If now both conditions hold true, then $\det L_x$ takes the zero value only if $x$ is zero. This implies that $\mathcal A$ is a division algebra. 

The proof is complete.

\end{proof}
Every real symmetric $2 \times 2$ matrix has exactly two (real) eigenvalues, counted with multiplicity. For any symmetric matrix $M$, let $\max \sigma(M)$ and $\min \sigma(M)$ denote the largest and smallest eigenvalues of its spectrum, respectively.

Let $P$ and $Q$ denote the matrices associated with the quadratic forms $p(\gamma, \delta)$ and $q(\gamma, \delta)$, as defined above. Then, the following two spectral criteria provide sufficient conditions for the division property.
\begin{corollary}
Let $\mathcal{A}$ be an algebra defined by the multiplication table $(T_p)$. If any  of the following two conditions is satisfied, then $\mathcal{A}$ is a division algebra.
\begin{itemize}
    \item[1.] $q(\gamma,\delta)$ is positive definite and $p(\gamma,\delta)$ is positive semidefinite;
    \item[2.] $q(\gamma,\delta)$ is positive definite and $\min\sigma(P)>-2\sqrt{\min\sigma(Q)}$.
\end{itemize}
\end{corollary}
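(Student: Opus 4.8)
The plan is to deduce both statements directly from the preceding proposition, which characterises the division property as: $q(\gamma,\delta)$ positive definite together with the strict inequality $p(\gamma,\delta) > -2\sqrt{q(\gamma,\delta)(\gamma^2+\delta^2)}$ for all $(\gamma,\delta)\neq(0,0)$. In both items $q$ is assumed positive definite, so in each case it remains only to verify that inequality.

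For item 1, I would note that positive definiteness of $q$ forces $q(\gamma,\delta)(\gamma^2+\delta^2)>0$ for every nonzero $(\gamma,\delta)$, so the right-hand side $-2\sqrt{q(\gamma,\delta)(\gamma^2+\delta^2)}$ is strictly negative, whereas $p(\gamma,\delta)\geq 0$ by positive semidefiniteness of $p$. The strict inequality is then immediate and the proposition gives the division property.

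For item 2, the tool is the Rayleigh-quotient bound for symmetric $2\times2$ matrices: writing $v=(\gamma,\delta)$, one has $p(\gamma,\delta)=v^{T}Pv\geq \min\sigma(P)\,(\gamma^2+\delta^2)$ and $q(\gamma,\delta)=v^{T}Qv\geq \min\sigma(Q)\,(\gamma^2+\delta^2)$. Since $q$ is positive definite, $\min\sigma(Q)>0$, so taking square roots in the second bound yields $\sqrt{q(\gamma,\delta)(\gamma^2+\delta^2)}\geq \sqrt{\min\sigma(Q)}\,(\gamma^2+\delta^2)$, and multiplying by $-2$ reverses it. Chaining the estimates, for any $(\gamma,\delta)\neq(0,0)$,
\[ p(\gamma,\delta)\;\geq\;\min\sigma(P)\,(\gamma^2+\delta^2)\;>\;-2\sqrt{\min\sigma(Q)}\,(\gamma^2+\delta^2)\;\geq\;-2\sqrt{q(\gamma,\delta)(\gamma^2+\delta^2)}, \]
where the middle strict inequality uses the hypothesis $\min\sigma(P)>-2\sqrt{\min\sigma(Q)}$ together with $\gamma^2+\delta^2>0$. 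The proposition then applies.

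There is no serious obstacle here; the only points needing care are keeping track of where strictness is genuinely required — in particular, using positive definiteness of $q$ both to guarantee $\min\sigma(Q)>0$ (so the square roots make sense and the Rayleigh bound survives taking square roots) and to dispose of the excluded case $(\gamma,\delta)=(0,0)$. I would also remark that item 1 is essentially the boundary instance $\min\sigma(P)\geq 0$ of item 2, so the two could be merged, but stating them separately keeps the positive-semidefinite case transparent.
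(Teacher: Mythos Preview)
Your proof is correct and follows essentially the same route as the paper: both items are reduced to the criterion of the preceding proposition, with item~2 handled via the Rayleigh-quotient bounds $p(\gamma,\delta)\ge \min\sigma(P)(\gamma^2+\delta^2)$ and $q(\gamma,\delta)\ge \min\sigma(Q)(\gamma^2+\delta^2)$ chained exactly as you do. If anything, you are slightly more careful than the paper in tracking where the strict inequality enters the chain.
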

\begin{proof}

Assume that the first condition holds, that is, $q(\gamma,\delta)$ is positive definite and $p(\gamma,\delta)$ is positive semidefinite. As follows from the proof of the previous proposition, positive definiteness of  $q(\gamma,\delta)$ and the condition $p(\gamma,\delta)\geq 0$ imply that $\mathcal A$ is a division algebra.

Let us now asume that the second condition holds, that is, $q(\gamma,\delta)$ is positive definite and $\min\sigma(P)>-2\sqrt{\min\sigma(Q)}$.

Using Rayleigh quotient, we can bound $p(\gamma,\delta)$ as follows 
$$\max\sigma(P)(\gamma^2+\delta^2)\geq p(\gamma,\delta)\geq\min\sigma(P)(\gamma^2+\delta^2).$$

Likewise, we can also  bound $\sqrt{q(\gamma,\delta)(\gamma^2+\delta^2)}$ 
$$\sqrt{\max\sigma(Q)}(\gamma^2+\delta^2)\geq \sqrt{q(\gamma,\delta)(\gamma^2+\delta^2)} \geq\sqrt{\min\sigma(Q)}(\gamma^2+\delta^2).$$

When $q(\gamma,\delta)$ is positive definite, we have
$$-2\sqrt{\min\sigma(Q)}(\gamma^2+\delta^2)\geq -2\sqrt{q(\gamma,\delta)(\gamma^2+\delta^2)} \geq -2\sqrt{\max\sigma(Q)}(\gamma^2+\delta^2). $$
Combining these inequalities, we obtain 
$$ p(\gamma,\delta)\geq\min\sigma(P)(\gamma^2+\delta^2) \geq -2\sqrt{\min\sigma(Q)}(\gamma^2+\delta^2)\geq -2\sqrt{q(\gamma,\delta)(\gamma^2+\delta^2)}.$$
By the previous proposition, $\mathcal A$ is a division algebra.
\end{proof}

Having settled the preliminary questions concerning the division property, we now examine the multiplication structure. Let 
$\mathcal A$ be a four-dimensional partially alternative real division algebra with multiplication  table $(T_p)$.
We will show that the set of imaginary units $\mathcal{I_A}$ is necessarily one of the following: a discrete set of two elements, a circle, or a sphere. The following discussion provides necessary and sufficient conditions under which the set of imaginary units $\mathcal{I_A}$ is discrete, consisting of exactly $+i$ and $-i$. The remaining cases will be treated uniformly in subsequent sections.

\begin{lemma}
Let $\mathcal{A}$ be a four-dimensional algebra given by the multiplication table $(T_p)$. Then, $\mathcal{I}_\mathcal{A}$ is a subset of $\operatorname{span}_{\mathbb R}(\{i,w,v\})$.
\end{lemma}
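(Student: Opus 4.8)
The plan is to take an arbitrary element $x = \alpha 1 + \beta i + \gamma w + \delta v \in \mathcal{A}$ satisfying $x^2 = -1$ and show that its scalar component vanishes, i.e.\ $\alpha = 0$. Once $\alpha = 0$ is established, $x$ lies in $\operatorname{span}_{\mathbb{R}}(\{i,w,v\})$ and we are done. To do this I would compute $x^2$ explicitly using the multiplication table $(T_p)$. The products $w^2, wv, vw, v^2$ all lie in $\operatorname{span}_{\mathbb{R}}(\{1,i\})$ by $(T_p)$, while the cross terms $1\cdot i$, $i \cdot w = -v$, $w \cdot i = v$, $i \cdot v = w$, $v \cdot i = -w$ are read directly from the table. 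The key structural feature to exploit is that $iw = -wi$ and $iv = -vi$, so the mixed terms $\beta\gamma(iw + wi)$ and $\beta\delta(iv + vi)$ cancel, and similarly $\gamma\delta(wv + vw) \in \operatorname{span}_{\mathbb{R}}(\{1,i\})$. Collecting terms, the coefficient of $w$ in $x^2$ is $2\alpha\gamma$ and the coefficient of $v$ is $2\alpha\delta$ (the $i w = -v$, $iv = w$ contributions from $\beta\gamma$ and $\beta\delta$ cancel against the symmetric ones since $x$ is being squared, not multiplied by a distinct element — more precisely $\beta\gamma\,iw + \gamma\beta\,wi = \beta\gamma(iw+wi) = 0$).

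Setting $x^2 = -1$ forces the $w$- and $v$-components to vanish: $2\alpha\gamma = 0$ and $2\alpha\delta = 0$. Hence either $\alpha = 0$, in which case we are finished, or $\gamma = \delta = 0$. In the latter case $x = \alpha 1 + \beta i \in \operatorname{span}_{\mathbb{R}}(\{1,i\}) = \mathcal{C} \cong \mathbb{C}$, and then $x^2 = (\alpha^2 - \beta^2)1 + 2\alpha\beta\, i = -1$ forces $\alpha\beta = 0$ and $\alpha^2 - \beta^2 = -1$; since $\alpha = 0$ would again land us in the desired conclusion, the only alternative is $\beta = 0$, giving $\alpha^2 = -1$, which is impossible over $\mathbb{R}$. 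Therefore $\alpha = 0$ in all cases, and $\mathcal{I}_\mathcal{A} \subseteq \operatorname{span}_{\mathbb{R}}(\{i,w,v\})$.

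I do not anticipate a genuine obstacle here: the proof is essentially a bookkeeping computation of $x^2$ in the basis $\{1,i,w,v\}$, and the only thing that really matters is the anticommutation relations $wi = -iw$, $vi = -iv$ recorded in $(T_p)$ (equivalently, $w, v \in \mathcal{E}_{+1}(i)$), which guarantee that the potentially troublesome mixed terms involving $\beta$ drop out and leave the $w$- and $v$-coefficients of $x^2$ equal to $2\alpha\gamma$ and $2\alpha\delta$. The one point requiring a moment's care is handling the degenerate case $\gamma = \delta = 0$ separately, as above, rather than dividing by $\gamma$ or $\delta$.
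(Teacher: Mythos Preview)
Your proof is correct and follows essentially the same approach as the paper: compute the $w$- and $v$-components of $x^2$ (which the paper writes as $2ryw+2rzv$), deduce that either the scalar part vanishes or $\gamma=\delta=0$, and dispose of the latter case by noting that imaginary units in $\operatorname{span}_{\mathbb R}\{1,i\}$ are $\pm i$. Your treatment is slightly more explicit about the anticommutation cancellations and the degenerate case, but the argument is the same.
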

\begin{proof}
It is clear that $\pm i\in \operatorname{span}_{\mathbb R}(\{i,w,v\})$. Now, suppose $r1+xi+yw+zv$ is an imaginary unit in $\mathcal{A}$; we wish to prove that $r=0$. Indeed, $(r1+xi+yw+zv)^2$ equals to $s_1 1+s_2 i+2ryw+2rzv$, where $s_1$ and $s_2$ are real-valued functions depending on $r,x,y,z$ and the coefficients $a,b,c,d,e,f,g,h$.

We observe that either $r=0$ or $y=z=0$. In the former case, the conclusion follows immediately. In the latter case, the only possible imaginary units are $\pm i$, which again implies $r=0$. The lemma is established.
\end{proof}

Next we introduce two quadratic forms in the variables $(y,z)\in\mathbb R^2$ to facilitate our discussion:
$$r(y,z)=ay^2+(c+e)yz+gz^2\quad\text{and}\quad s(y,z)=by^2+(d+f)yz+hz^2.$$

\begin{proposition}
Let $\mathcal{A}$ be a four-dimensional RDA given by the multiplication table $(T_p)$ and let $r(y,z), s(y,z)$ be the quadratic forms defined above. 

Then $\mathcal I_\mathcal{A}=\{\pm i\}$, that is, $\mathcal{A}$ admits exactly two imaginary units if and only if $s(y,z)$ is a definite quadratic form.  
\end{proposition}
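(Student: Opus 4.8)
The plan is to reduce the statement to an elementary fact about the two quadratic forms $r$ and $s$, by squaring a general candidate imaginary unit. By the preceding lemma, every element of $\mathcal{I}_\mathcal{A}$ lies in $\operatorname{span}_{\mathbb R}(\{i,w,v\})$, so I would start by writing an arbitrary candidate as $q=xi+yw+zv$ with $x,y,z\in\mathbb R$. Using the multiplication table $(T_p)$ — in particular the relations $iw=-v$, $wi=v$, $iv=w$, $vi=-w$ together with $w^2=a1+bi$, $wv=c1+di$, $vw=e1+fi$, $v^2=g1+hi$ — a direct expansion yields
$$q^2=\bigl(-x^2+r(y,z)\bigr)1+s(y,z)\,i,$$
where the coefficients of $w$ and of $v$ cancel identically; this cancellation is precisely what makes the two forms $r$ and $s$ sufficient to encode the problem. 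Hence $q\in\mathcal{I}_\mathcal{A}$ if and only if $s(y,z)=0$ and $x^2=1+r(y,z)$, and note that $(x,y,z)=(\pm1,0,0)$ always satisfies this, recovering $\pm i\in\mathcal{I}_\mathcal{A}$.

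For the ``if'' direction, suppose $s$ is definite. Then $s(y,z)=0$ forces $(y,z)=(0,0)$, so $x^2=1+r(0,0)=1$ and $q=\pm i$; thus $\mathcal{I}_\mathcal{A}=\{\pm i\}$. For the converse I would argue contrapositively: if $s$ is not definite, then its zero set contains some nonzero vector $(y_0,z_0)$ (this covers the indefinite, the degenerate semidefinite, and the identically zero cases at once). Along the line $(ty_0,tz_0)$, $t\in\mathbb R$, the form $s$ still vanishes while $r(ty_0,tz_0)=t^2r(y_0,z_0)$, so the condition to solve becomes $x^2=1+t^2r(y_0,z_0)$. Since the right-hand side equals $1>0$ at $t=0$, it remains positive for all sufficiently small $t$; picking any such $t\neq0$ and $x=\sqrt{1+t^2r(y_0,z_0)}$ produces an imaginary unit $q=xi+ty_0w+tz_0v$ with $(ty_0,tz_0)\neq(0,0)$, hence $q\neq\pm i$, so $\mathcal{I}_\mathcal{A}\neq\{\pm i\}$.

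There is no real obstacle here; the only computation of substance is the expansion of $q^2$, and the single point requiring care is the solvability of $x^2=1+r(y,z)$ over $\mathbb R$ in the converse direction — which is why one passes to a small multiple of $(y_0,z_0)$ rather than using it outright (if $r(y_0,z_0)\geq0$ any multiple works, but if $r(y_0,z_0)<0$ smallness of $t$ is genuinely needed). I would also remark that the division hypothesis on $\mathcal A$ is not actually invoked in this argument; it is kept only for consistency with the standing assumptions.
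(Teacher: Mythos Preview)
Your proof is correct, and the ``if'' direction is identical to the paper's. In the converse direction, however, you take a genuinely different route. The paper fixes a nonzero $(y_0,z_0)$ with $s(y_0,z_0)=0$ and then invokes the division hypothesis: if $r':=r(y_0,z_0)\ge 0$, then $\sqrt{r'}\,i+y_0w+z_0v$ would be nonzero and square to zero, contradicting divisibility; hence $r'<0$, and $u=(y_0w+z_0v)/\sqrt{-r'}$ is an imaginary unit lying in $\operatorname{span}_{\mathbb R}\{w,v\}$. You instead sidestep the sign of $r(y_0,z_0)$ entirely by scaling: for small $t\neq 0$ the quantity $1+t^2r(y_0,z_0)$ is positive, so $x=\sqrt{1+t^2r(y_0,z_0)}$ exists and $xi+ty_0w+tz_0v$ is an imaginary unit distinct from $\pm i$.

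Your approach is more elementary and, as you correctly observe, does not use the division hypothesis at all, so it proves a slightly stronger statement. The paper's approach, on the other hand, extracts extra structural information: the additional imaginary unit it produces has no $i$-component, i.e.\ it lies in $\operatorname{span}_{\mathbb R}\{w,v\}$. That sharper placement is exactly what is exploited in the next section when reducing $(T_p)$ to the canonical table $(T_d)$, so the paper's detour through the nilpotent argument is not wasted.
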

\begin{proof}
Suppose first that $s(y,z)$ is definite.  Then for any non-zero linear combination $xi+yw+zv\in\operatorname{span}_{\mathbb R}(\{i,w,v\})$ with $(y,z)\neq (0,  0)$ we have that
$$(xi+yw+zv)^2=(-x^2+r(y,z))1+ s(y,z)i\neq -1$$

Now suppose $s(y,z)$ is an indefinite quadratic form. Consider a nonzero vector $(y,z)$ such that the quadratic form $s(y,z)$ vanishes. Then, $(yw+zv)^2= r(y,z)1$. If $r':=r(y,z)$ is nonnegative, then the element $\sqrt{r'}i+yw+zv$ is nonzero and nilpotent of index $2$, contradicting the division property $\mathcal{A}$. Therefore, $r'$ is negative; define $u:=\dfrac{yw+zv}{\sqrt{-r'}}$. Then $u^2=-1$, so $u$ is an imaginary unit distinct from $\pm i$. This proves the claim.
\end{proof}

The next example indicates the existence of a four-dimensional partially alternative real division algebra $\mathcal{A}$ whose set of imaginary units $\mathcal{I_A}$ is $\{\pm i\}$.
\begin{example}
Let $\mathcal{A}$ be a four-dimensional algebra given by the multiplication table $(T_p)$ where $a=d=g=-1$, $b=1$, $c=f=0$, $e=h=1/2$. Then
$$p(y,z)=2y^2+z^2$$
$$q(y,z)=y^2+z^2/4$$
$$s(y,z)=y^2-yz+z^2/2$$
are all positive definite. Hence, $\mathcal{A}$ is a partially alternative real division algebra with exactly two imaginary units.
\end{example}

\section{The Canonical Multiplication Table of Partially Alternative RDAs}

Let $\mathcal A$  denote a four-dimensional partially alternative real division algebra equipped with a unity
1 and a reflection. This section focuses on streamlining the multiplication table $(T_p)$  associated with 
$\mathcal A$ under the assumption of existence of at least three distinct imaginary units. The presence of an additional imaginary unit beyond 
$\pm i$ plays a pivotal role in this analysis: by using its algebraic properties, we systematically reduce 
$(T_p)$ to a simpler form involving only two structure constants. 

\begin{theorem} \label{CanonTable}
Let $\mathcal A$ be a four-dimensional partially alternative unital RDA  that possesses a reflection. Assume that $\mathcal A$ has at least three imaginary units. 
Then there exists a basis $\{1, i, j, k\}$ of $\mathcal A$ with the following multiplication table:

$$ \begin{tabular}{c|cccc} 
          & $\bf 1$            & $\bf i$                                  & $\bf j$                                                 & $\bf  k$      \\ 
\hline
$\bf 1$     &  $1$           & $i$                                  &  $j$                                                    &   $k$     \\
$\bf i$      &  $i$            & $-1$                                &  $-k$                                                  &   $j $ \\
$\bf j$     &  $j$            & $k$                                  &  $-1$                                                &   $-i$   \\ 
$\bf k$     &  $k$           &  $-j$                               &  $ i$                                                  & $ g 1 + hi$  \\ 
\end{tabular} \eqno(T_d) $$
where $g, h\in\mathbb R$. The converse is also true: any four-dimensional RDA with the multiplication table $(T_d)$ having $h\neq 0$  is partially alternative. If $h=0$, then $\mathcal A$ is partially alternative if and only if $g=-1$. In the latter case it is isomorphic to the real quaternion algebra.
\end{theorem}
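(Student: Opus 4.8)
The plan is to establish the two implications separately, deriving the forward one from the table $(T_p)$ supplied by the theorem of \cite{HNT}. Fix a basis $\{1,i,w,v\}$ realising $(T_p)$. Reading off $(T_p)$ gives $iwi=w$ and $ivi=v$, so $\operatorname{span}_{\mathbb R}\{w,v\}\subseteq\mathcal E_{+1}(i)$; since also $1,i\in\mathcal E_{-1}(i)$, Corollary~\ref{ESD} forces $\mathcal E_{-1}(i)=\operatorname{span}_{\mathbb R}\{1,i\}$ and $\mathcal E_{+1}(i)=\operatorname{span}_{\mathbb R}\{w,v\}$. Two further facts come straight from $(T_p)$: the operator $R_i$ maps $\operatorname{span}_{\mathbb R}\{w,v\}$ into itself, and $(yw+zv)^2=r(y,z)\,1+s(y,z)\,i$ for all real $y,z$, where $r,s$ are the quadratic forms introduced just before the proposition characterising $\mathcal I_{\mathcal A}=\{\pm i\}$. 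The aim is to reduce $(T_p)$ to $(T_d)$ by finding an imaginary unit $j$ inside $\operatorname{span}_{\mathbb R}\{w,v\}$ and setting $k:=ji$.

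Because $\mathcal A$ has at least three imaginary units, that proposition shows $s$ is not definite, so $s(y_0,z_0)=0$ for some $(y_0,z_0)\neq(0,0)$; put $u:=y_0w+z_0v\neq0$, so $u^2=r(y_0,z_0)\,1$. Here the division hypothesis enters crucially: it rules out $r(y_0,z_0)=0$ (which would make $u$ a nonzero square-zero element) and also $r(y_0,z_0)>0$ (for then, after scaling, $(u')^2=1$ would give $(u'-1)(u'+1)=(u')^2-1=0$, hence $u'=\pm1\in\operatorname{span}_{\mathbb R}\{w,v\}$, impossible). Thus $r(y_0,z_0)<0$, and $j:=u/\sqrt{-r(y_0,z_0)}$ is an imaginary unit lying in $\mathcal E_{+1}(i)$, whence $ij=-ji$. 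Put $k:=ji=-ij\in\operatorname{span}_{\mathbb R}\{w,v\}$; since $ji=\lambda j$ would yield $\lambda^2=-1$ upon applying $R_i$ again, $j$ and $k$ are linearly independent and $\{1,i,j,k\}$ is a basis. Using $j^2=-1$, $ij=-ji=-k$, together with the partial alternativity instances $(i,i,y)=(j,j,y)=0$ (all $y$), $(i,j,i)=(j,i,j)=(j,i,i)=0$, short manipulations give $ik=j$, $ki=-j$, $jk=-i$, $kj=i$; and writing $k=\mu w+\nu v$ shows $k^2$ is an $\mathbb R$-combination of $w^2,wv,vw,v^2\in\operatorname{span}_{\mathbb R}\{1,i\}$, so $k^2=g\,1+h\,i$. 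This is $(T_d)$.

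For the converse, observe that $(T_d)$ is the instance $a=d=-1$, $b=c=e=0$, $f=1$ of $(T_p)$ with $w=j$, $v=k$, so the lemma there gives $\mathcal I_{\mathcal A}\subseteq\operatorname{span}_{\mathbb R}\{i,j,k\}$, and the forms specialise to $r(y,z)=-y^2+gz^2$, $s(y,z)=hz^2$. Suppose first $h\neq0$: an imaginary unit $\beta i+yj+zk$ then forces $hz^2=0$, i.e.\ $z=0$, and $\beta^2=1-y^2$, so $\mathcal I_{\mathcal A}=\{x_\theta:=\cos\theta\,i+\sin\theta\,j\}$. For such $x_\theta$ and arbitrary $y\in\mathcal A$, expanding $(x_\theta,x_\theta,y)$, $(x_\theta,y,x_\theta)$, $(y,x_\theta,x_\theta)$ in powers of $\cos\theta,\sin\theta$ reduces partial alternativity of $\mathcal A$ to the operator identities $L_i^2=L_j^2=-\operatorname{id}$, $L_iL_j+L_jL_i=0$, the corresponding statements for $R_i,R_j$, and $[L_i,R_i]=[L_j,R_j]=0=[L_i,R_j]+[L_j,R_i]$; each is checked on $\{1,i,j,k\}$ and involves only the action of $i$ and $j$, so holds for every $g,h$. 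Hence $\mathcal A$ is partially alternative. Suppose now $h=0$: then $q(y,z)=y^2-gz^2$, which the division criterion forces to be positive definite, so $g<0$; now $k':=k/\sqrt{-g}$ is an imaginary unit, and partial left alternativity at $k'$ demands $L_{k'}^2=-\operatorname{id}$, whereas $L_k^2(i)=-i$ yields $L_{k'}^2(i)=i/g$, forcing $g=-1$. Conversely, if $g=-1$ and $h=0$, one verifies that $1\mapsto1$, $\mathbf i\mapsto-i$, $\mathbf j\mapsto-j$, $\mathbf k\mapsto-k$ defines an algebra isomorphism $\mathbb H\to\mathcal A$; since $\mathbb H$ is alternative and partial alternativity is an isomorphism invariant, $\mathcal A$ is partially alternative, and in fact $\mathcal A\cong\mathbb H$.

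The principal obstacle is the middle paragraph: producing an imaginary unit that anticommutes with $i$, where the division property must be used in the indirect form ``$u^2=1\Rightarrow u=\pm1$'' to exclude $r(y_0,z_0)>0$. After the quaternion-like basis $\{1,i,j,k\}$ has been obtained, the remaining entries of $(T_d)$ and the verifications in the converse are routine associator bookkeeping; the only substantive point in the converse is that $h=0$ enlarges $\mathcal I_{\mathcal A}$ — to (a topological copy of) $S^2$ inside $\operatorname{span}_{\mathbb R}\{i,j,k\}$ — by precisely enough to force $g=-1$.
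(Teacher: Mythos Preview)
Your argument is correct and follows the same overall strategy as the paper—produce an imaginary unit $j$ anticommuting with $i$, rule out the wrong sign via division, rescale, and let partial alternativity determine the remaining products—but you package the first step differently. The paper first performs a preliminary change of basis so that the extra imaginary unit lies in $\operatorname{span}_{\mathbb R}\{1,i,w\}$, squares it to force $b=0$, and then eliminates $a\ge 0$ by exhibiting the nilpotent $(i+w/\sqrt a)^2=0$; you instead invoke the Section~3 characterisation (non-definiteness of $s$) to locate $j$ directly inside $\mathcal E_{+1}(i)=\operatorname{span}_{\mathbb R}\{w,v\}$, and dispose of the positive case with the factorisation $(u'-1)(u'+1)=0$. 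Your route is a little more economical because it recycles the quadratic-form machinery already set up for $r$ and $s$; the paper's proof is more self-contained within its section. The converse arguments essentially coincide: the paper also identifies $\mathcal I_{\mathcal A}$ as the circle $\{\beta i+\gamma j:\beta^2+\gamma^2=1\}$ when $h\neq 0$, and when $h=0$ it forces $g<0$ by division and then $g=-1$ via $k'(k'i)=-i$, which is exactly your $L_{k'}^2(i)=-i$.
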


The proof of this theorem is broken down into a series of lemmas, which follow. To proceed, it is necessary to revisit and summarize the established facts and properties of partially alternative algebras that were obtained in the preceding work \cite{HNT}. 

Recall that if $\mathcal A$ is a four-dimensional 
partially alternative RDA with a unit and a reflection, then $\mathcal A = \mathcal C \oplus \mathcal B$ such that $\mathcal C \cong \mathbb C,$ $\mathcal C\mathcal B =\mathcal B\mathcal C=\mathcal B$ and 
$\mathcal B\mathcal B=\mathcal C$.  We note that $\mathcal C$ contains exactly two imaginary units: $\{-i, i \}$. Hence, if there exists a third one, say $u$, then it does not belong to $\mathcal C$, that is, 
$u \in \mathcal A \setminus \mathcal C.$

We next outline the basis construction for $\mathcal{A}$ with multiplication table $(T_p)$: an initial vector $w \in \mathcal{B}$ is chosen arbitrarily, while the fourth basis vector $v$ is defined as $wi$. 

 This  allows us to choose $w$ in a particular manner. Namely, let us write the third imaginary unit, which exists by the assumption, as
$$u = \lambda_1 1 + \lambda_2 i +\lambda_3 w +\lambda_4 v$$ where $\lambda_i \in \mathbb R$, $i=1, 2, 3, 4.$  By assumption,
 $u \in \mathcal A \setminus \mathcal C;$ therefore, $\lambda^2_3 + \lambda^2_4 \neq 0$.
Then we define a new basis for $\mathcal A$ as follows: $\{1, i, w', v'\}$ where $w' =  \lambda_3 w +\lambda_4 v$ and $v'= w'i$.

As noted above, the multiplication table of $\mathcal{A}$ with respect to a new basis is of type $(T_p)$, potentially involving different structure constants. Additionally, $u$ lies in 
$\mathrm{span}_{\mathbb{R}}\{1, i, w'\}\setminus\mathcal{C}$.


\begin{lemma}
Let $\mathcal A$ be a four-dimensional partially alternative RDA with a unit and a reflection. Let $\{1, i, w, v\}$ be a basis of $\mathcal A$ with multiplication table $(T_p)$.   Assume that there exists $u\in \mathcal{I}_\mathcal{A}\setminus \mathcal C$ and $u\in \text{span}_{\mathbb R}\{1, i, w\}.$
Then after rescaling the basis elements, the multiplication table of  $\mathcal A$  takes the form  $(T_d)$.
\end{lemma}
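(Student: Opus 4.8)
The plan is to exploit the location of $u$ inside $\operatorname{span}_{\mathbb R}\{1,i,w\}$ in two stages. First I use the relation $u^2=-1$ to force $b=0$ and to conclude that $w$ is a positive scalar multiple of an imaginary unit $j$. Then, crucially, I apply the partial alternativity identities \emph{to $j$} (rather than to $u$) to pin down the remaining structure constants $c,d,e,f$, after which rewriting $(T_p)$ in the rescaled basis $\{1,i,j,k\}$ yields $(T_d)$ directly.

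Write $u=\lambda_1 1+\lambda_2 i+\lambda_3 w$; since $u\notin\mathcal C$ we have $\lambda_3\neq0$. Expanding $u^2$ by means of $(T_p)$ (in particular $iw=-v$, $wi=v$, $w^2=a1+bi$) gives
$$u^2=\bigl(\lambda_1^2-\lambda_2^2+\lambda_3^2 a\bigr)1+\bigl(2\lambda_1\lambda_2+\lambda_3^2 b\bigr)i+2\lambda_1\lambda_3\,w.$$
Imposing $u^2=-1$: the vanishing of the $w$-component forces $\lambda_1=0$ (as $\lambda_3\neq0$), and substituting this back, the $i$-component forces $b=0$ (the $1$-component then reads $\lambda_3^2 a=\lambda_2^2-1$, which we will not need). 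Thus $w^2=a1$. Because $\mathcal A$ is a division algebra, $L_w$ is injective, so $w^2\neq0$ and hence $a\neq0$; moreover, if $a>0$ then $w-\sqrt{a}\,1$ and $w+\sqrt{a}\,1$ would be nonzero with product $w^2-a1=0$, contradicting divisibility. Therefore $a<0$. Write $a=-t^2$ with $t>0$ and set $j:=w/t$, $k:=v/t$; then $j^2=w^2/t^2=-1$, so $j\in\mathcal I_\mathcal A\setminus\mathcal C$, and $k=ji$ since $v=wi$. Clearly $\{1,i,j,k\}$ is a basis of $\mathcal A$.

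Now apply partial alternativity to the imaginary unit $j$. Partial left alternativity gives $L_j^2=-\operatorname{id}$; evaluating at $i$ and using $(T_p)$,
$$j(ji)=\frac{1}{t^2}\,wv=\frac{1}{t^2}(c1+di)=-i,$$
so $c=0$ and $d=-t^2=a$. Partial right alternativity gives $R_j^2=-\operatorname{id}$; evaluating at $i$,
$$(ij)j=-\frac{1}{t^2}\,vw=-\frac{1}{t^2}(e1+fi)=-i,$$
so $e=0$ and $f=t^2=-a$. Finally $k^2=v^2/t^2=\frac{1}{t^2}(g1+hi)=g'1+h'i$ with $g'=g/t^2$ and $h'=h/t^2$.

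It remains to record the multiplication table of $\mathcal A$ in the basis $\{1,i,j,k\}$. Substituting $j=w/t$, $k=v/t$ and the values $b=0$, $a=d=-t^2$, $c=e=0$, $f=t^2$ into $(T_p)$, one checks that $j^2=-1$, $ji=k$, $ij=-k$, $ik=j$, $ki=-j$, $jk=-i$, $kj=i$, and $k^2=g'1+h'i$, which is precisely $(T_d)$ with $(g,h)$ replaced by $(g',h')$. I expect the only delicate point to be the first stage: one must observe that the position of $u$ in $\operatorname{span}_{\mathbb R}\{1,i,w\}$ already forces $\lambda_1=0$ and $b=0$, and that it is divisibility — not any further partial alternativity axiom — that makes $a$ strictly negative; once $w$ is recognised as a scalar multiple of an imaginary unit, $u$ plays no further role and the identities for $j$ finish the argument by a direct computation.
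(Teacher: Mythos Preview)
Your proof is correct and follows essentially the same route as the paper's: from $u^2=-1$ deduce $\lambda_1=0$ and $b=0$, use divisibility to rule out $a\ge 0$, rescale so that $w$ becomes an imaginary unit $j$, and then apply partial left and right alternativity to $j$ at the element $i$ to determine $c,d,e,f$. The only cosmetic difference is that the paper excludes $a>0$ via the nilpotent $\bigl(i+w/\sqrt{a}\bigr)^2=0$ rather than your factorisation $(w-\sqrt{a}\,1)(w+\sqrt{a}\,1)=0$; both are equivalent zero-divisor arguments.
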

\begin{proof}
Let $u=x1 + yi +zw$ be an imaginary unit in $\mathcal{I}_\mathcal{A}\setminus \mathcal C$ and $x, y, z \in \mathbb R$. Then routine computation shows that 
$$u^2 =  (x^2 - y^2 + az^2) 1 + (2xy + bz^2) i + (2xz) w.$$
Since $u^2=-1$, we have that 
\begin{align*}
& x^2 - y^2 + az^2 = -1,\\
& 2xy + bz^2 = 0,\\
& 2xz = 0. 
\end{align*}
As follows from the third equation, either $x=0$ or $z=0$. If $z=0$, then $u \in \mathcal C$ which contradicts to our assumption.  
Thus, $z\neq 0$ and $x=0$. This yields the following 
\begin{align*}
& a z^2-y^2  = -1,\\
& bz^2 = 0,
\end{align*}
which implies that $b=0$.

Let us now consider the following possibilities for $a$.

If $a=0$, then $w^2=0$ in $(T_p)$. This contradicts to the assumption that $\mathcal A$ is division. 

If $a>0$, then $$\left( i+\dfrac{w}{\sqrt{a}}\right) \left( i+\dfrac{w}{\sqrt{a}}\right) = i^2 + i \dfrac{w}{\sqrt{a}} +  \dfrac{w}{\sqrt{a}} i +  \left(\dfrac{w}{\sqrt{a}} \right)^2 = -1+ 0 +1 =0, $$   
which is impossible.

Thus, $a<0$. Let us rescale the basis as follows: $w' =  \dfrac{w}{\sqrt{-a}} $ and  $v' =  \dfrac{v}{\sqrt{-a}}.$ Then, the multiplication table of $\mathcal A$ with respect to a new basis $\{1, i, w', v'\}$ takes the form:
$$ \begin{tabular}{c|cccc} 
          & $\bf 1$            & $\bf i$                                  & $\bf w$                                                 & $\bf v$      \\ 
\hline
$\bf 1$     &  $1$           & $i$                                  &  $w$                                                  &   $v$     \\
$\bf i$      &  $i$            & $-1$                                &  $-v$                                                 &   $w $ \\
$\bf w$     &  $w$          & $v$                                 &  $ -1$                                                 & $ c' 1 + d' i$   \\ 
$\bf v$     &  $v$           &  $-w$                              &   $e' 1 + f' i $                                       & $g' 1 +  h' i$  \\ 
\end{tabular} $$
where $c'=\dfrac{c}{a},\, d' =\dfrac{d}{a},\, e'=\dfrac{e}{a},\, f'=\dfrac{f}{a},\, g'=\dfrac{g}{a},\, h'=\dfrac{h}{a}.$
\par\medskip
Further, we recall that $\mathcal A$ is a partially alternative algebra. In particular, we have that $-i=(ww)i = w(wi)$ as $w$ ia also an imaginary unit. Computing $w(wi) = wv = c'1+d'i$ and equating to $-i$, 
we obtain $c'=0$ and $d' =-1$.

Likewise, $-i = i(ww)= (iw)w$. Computing $(iw)w = -vw = -e'1-f'i$ and equating to $-i$,  we have that $e'=0$ and $f'=1$. 

As a result, the multiplication table of $\mathcal A$ takes the required form $(T_d)$.

\end{proof}

\begin{lemma}
Let $\mathcal A$ be a four-dimensional  unital RDA  whose multiplication table is given by $(T_d)$.  If $h\neq 0$, then $\mathcal A$ is partially alternative.
 If $h=0$, then $\mathcal A$ is partially alternative if and only if $g=-1$. In this case $\mathcal A$ is isomorphic to the real quaternion algebra.
\end{lemma}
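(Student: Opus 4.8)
The plan is to verify the three partial alternativity identities $(x,x,y)=0$, $(x,y,x)=0$, $(y,x,x)=0$ directly on the algebra $\mathcal A$ with table $(T_d)$, but \emph{only for imaginary units} $x\in\mathcal I_{\mathcal A}$, and then to identify $\mathcal A$ in the degenerate case $h=0$. The key reduction is that, by Definition~\ref{PA} and the trilinearity of the associator, each identity need only be checked on basis vectors in the last two slots; so after one settles which elements are imaginary units, only finitely many associator evaluations remain. Concretely, I would first compute $\mathcal I_{\mathcal A}$: writing a general element $u=\alpha 1+\beta i+\gamma j+\delta k$ and squaring using $(T_d)$, one finds (using $jk=-i$, $kj=i$, $ij=-k$, $ji=k$, $ki=-j$, $ik=j$ and $k^2=g1+hi$) that $u^2$ has $j$-component $2\alpha\gamma$ and $k$-component $2\alpha\delta+h\delta^2/?$—more precisely the cross terms involving $\delta$—so that $u^2=-1$ forces (for $h\neq 0$) either $\delta=0$ and then $\alpha\gamma=0$, or a constraint pinning $\alpha=0$; in all cases $\alpha=0$, i.e. every imaginary unit lies in $\operatorname{span}_{\mathbb R}\{i,j,k\}$, and in fact on a conic there. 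This is the analogue of the Lemma just proved that $\mathcal I_{\mathcal A}\subseteq\operatorname{span}_{\mathbb R}\{i,w,v\}$, specialized to $(T_d)$.

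Next, for the forward direction with $h\neq 0$: I would check partial left alternativity $(x,x,y)=0$ for each imaginary unit $x$. By the Proposition relating $(x,x,y)=0$ to $L_x$ being a complex structure, it suffices to show $L_x^2=-\operatorname{id}$ for every $x\in\mathcal I_{\mathcal A}$, equivalently $x(xy)=-y$ for all $y$ in the basis. Since $x$ has no $1$-component, write $x=\beta i+\gamma j+\delta k$ with $x^2=-1$; then $x(xy)=-y$ is a polynomial identity in $(\beta,\gamma,\delta)$ that must be shown to follow from the defining relations of the conic $\{x^2=-1\}$. The same scheme handles $(y,x,x)=0$ (equivalently $(yx)x=-y$, i.e. $R_x^2=-\operatorname{id}$) and flexibility $(x,y,x)=0$ (equivalently $L_x$ and $R_x$ commute, i.e. $(xy)x=x(yx)$). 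I expect the bulk of the computation to be a handful of $4\times 4$ matrix identities; the elements $i,j$ already satisfy the quaternionic relations from the first three rows/columns of $(T_d)$, so the only genuinely new data is the product $k^2=g1+hi$, and every associator one must check reduces, after expanding, to a consequence of $k^2=g1+hi$ together with the conic equation satisfied by the imaginary unit in question. I would record $\mathcal I_{\mathcal A}$ explicitly (a circle or sphere when $h\neq 0$, by the earlier structural discussion) so that "ranging over $x\in\mathcal I_{\mathcal A}$" becomes "ranging over the points of that conic," and then the identities are polynomial identities modulo the conic's ideal.

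For the case $h=0$: here $k^2=g1$. If $\mathcal A$ is to be partially alternative, then in particular $k\cdot k^2 = k^2\cdot k$ must be consistent, but more to the point I would use that $j$ and $k$ are imaginary units only under a constraint on $g$. Since $k^2=g1$, the element $k$ is an imaginary unit iff $g=-1$; and one checks that if $g\neq-1$ the partial left alternativity identity already fails on some imaginary unit, or—cleaner—that $\mathcal A$ then fails to be a division algebra or fails partial alternativity by exhibiting a single bad associator. When $g=-1$ the table $(T_d)$ becomes exactly the quaternion multiplication table (with $i,j,k$ the standard units: $ij=-k$, $jk=-i$, $ki=-j$ up to the sign conventions displayed, which one reconciles by possibly replacing $k$ by $-k$), so $\mathcal A\cong\mathbb H$, which is associative, hence alternative, hence partially alternative. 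I would close by remarking that $\mathbb H$ being a division algebra settles the "unital RDA" hypothesis automatically in that case.

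\textbf{Main obstacle.} The hard part will be organizing the verification of the three identities over \emph{all} imaginary units rather than all elements: one cannot simply invoke linearity to reduce to basis vectors in the \emph{first} slot of the associator, because $\mathcal I_{\mathcal A}$ is not a subspace. The efficient route is to parametrize $\mathcal I_{\mathcal A}$ as a conic in $\operatorname{span}_{\mathbb R}\{i,j,k\}$ (using the squaring computation above), reduce each identity to a polynomial in the three coefficients, and then show it lies in the ideal generated by the conic equation $-\beta^2-\gamma^2+g\delta^2+h\beta\delta+1=0$ (or its analogue); keeping the algebra of that reduction under control, and not drowning in the $4\times4$ multiplication, is the real work. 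A secondary subtlety is making sure, in the $h=0$, $g=-1$ branch, that the sign conventions in $(T_d)$ genuinely match $\mathbb H$ and not a "wrong-handed" variant that is anti-isomorphic but not isomorphic—this is resolved by an explicit basis change, e.g. $k\mapsto -k$, which I would carry out.
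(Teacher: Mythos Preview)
Your overall strategy matches the paper's: first determine $\mathcal I_{\mathcal A}$ explicitly, then verify the three partial-alternativity identities on basis vectors in the free slot, and handle $h=0$ separately via a single associator check. However, you are making the $h\neq 0$ case much harder than it is, and the computation you sketch contains an error that obscures the key simplification.

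Squaring $u=\alpha 1+\beta i+\gamma j+\delta k$ in $(T_d)$ gives (after setting $\alpha=0$, which you correctly force)
\[
u^2=(-\beta^2-\gamma^2+g\delta^2)\,1 + h\delta^2\, i,
\]
so the $i$-coefficient is $h\delta^2$, not $h\beta\delta$ as in your conic equation. Hence for $h\neq 0$ the condition $u^2=-1$ forces $\delta=0$ and $\beta^2+\gamma^2=1$: the imaginary units form exactly the circle $\{\beta i+\gamma j:\beta^2+\gamma^2=1\}$ in the $(i,j)$-plane, with no $k$-component whatsoever. This is precisely what the paper records, and it dissolves your ``main obstacle'': for $x=\beta i+\gamma j$ the products $x(xy)$, $(yx)x$, $(xy)x$, $x(yx)$ with $y\in\{1,i,j,k\}$ involve only the quaternion-like part of the table, and the identities $x(xy)=(yx)x=-y$ and $(xy)x=x(yx)$ reduce immediately to $\beta^2+\gamma^2=1$. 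No ideal-membership argument in three variables is needed.

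For $h=0$ the paper's route is also slightly sharper than what you outline. One does not test whether $k$ itself is an imaginary unit; rather, division rules out $g\geq 0$ (via $(\sqrt{g}\,1+k)(\sqrt{g}\,1-k)=0$ when $g>0$), and for $g<0$ the element $u=k/\sqrt{-g}$ \emph{is} an imaginary unit, so partial left alternativity demands $u(ui)=-i$, which computes to $i/g=-i$ and forces $g=-1$. Your ``sign-convention'' worry about $\mathbb H$ is unnecessary: with $g=-1$, $h=0$ the table $(T_d)$ is literally a quaternion table.
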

\begin{proof}
We first assume that $h\neq 0$. Then $\mathcal I_{\mathcal A} = \{ \beta i + \gamma j \,|\, \beta^2 +\gamma^2 = 1\}$. It is a routine check to show that 
for any $u\in \mathcal I_{\mathcal A}$ and $x\in \{i, j, k \}$, we have that $u(ux) =  (xu)u = -x$ and $(ux) u = u(xu)$. Therefore, $\mathcal A$ is indeed  partially 
alternative.

If $h=0$, then $k^2 = g1$. Note that $g$ cannot be zero as $\mathcal A$ is a division algebra. If $g>0$, then 
$$ (\sqrt{g} 1 + k) (\sqrt{g} 1 - k) = g1 - k^2 = g1 - g1 =0$$ which contradicts to the fact that $\mathcal A$ is a division algebra.

If $g<0$, then $u = k/\sqrt{-g}$ is an imaginary unit. Therefore, we must have $u(u i) = u^2 i =-i$. Hence, 
$$ \dfrac{k}{\sqrt{-g}} \left(\dfrac{k}{\sqrt{-g}} i\right) = (-1/g) k(ki) =  (-1/g) (-kj) = (1/g) i =-i$$ implies that $g=-1$, as required. 

On the other hand, it is clear that when $h=0$, $g=-1$, $\mathcal A$ is isomorphic to the real quaternion algebra. The proof is complete.
\end{proof}

\begin{proposition}\label{prop}
Let $\mathcal A$ be a four-dimensional partially alternative algebra with a unit, and let  $\{1, i, j, k\}$ be a basis of $\mathcal A$ with multiplication table $(T_d)$. Then 
$\mathcal A$ is division if and only if $g+h^2/4 < 0$.

\end{proposition}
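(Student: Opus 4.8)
The plan is to compute the determinant of the left-multiplication operator $L_x$ with respect to the basis $\{1,i,j,k\}$ and reduce the division criterion, exactly as in the earlier proposition on $(T_p)$, to a positivity condition on a pair of quadratic forms — only now the forms are far simpler because the table $(T_d)$ has all but two structure constants fixed. Write a general element as $x = \alpha 1 + \beta i + \gamma j + \delta k$. Using the multiplication table $(T_d)$, one assembles the matrix of $L_x$ in the ordered basis $\{1,i,j,k\}$ column by column (the columns are $x\cdot 1,\ x\cdot i,\ x\cdot j,\ x\cdot k$, each read off from $(T_d)$, remembering that $k^2 = g1 + hi$). A direct expansion should give
$$\det(L_x) = \bigl(\alpha^2+\beta^2+\gamma^2+\delta^2\bigr)^2 + (\text{lower-order corrections in }g,h),$$
and more precisely I expect it to collapse to a monic convex quadratic in $t := \alpha^2+\beta^2+\gamma^2$ of the form $t^2 + p\,\delta^2\, t + q\,\delta^4$ (or something equivalent after collecting the $\delta$-terms), because $(T_d)$ already coincides with the quaternion table except in the $k\cdot k$ entry, so the only deviation from $\det(L_x) = (\alpha^2+\beta^2+\gamma^2+\delta^2)^2$ is driven by the pair $(g,h)$ multiplying powers of $\delta$.

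Having obtained the explicit polynomial in $t$ and $\delta^2$, the division property is equivalent (by the same convexity argument used in the $(T_p)$ proposition: $\mathcal A$ is division iff $\det(L_x)=0$ forces $x=0$) to the statement that this quadratic has no nonnegative real root $t$ for any $\delta\neq 0$, and also no vanishing when $\delta=0$. The $\delta=0$ case gives $\det(L_x) = t^2$, which is zero only at $t=0$, i.e. at $x=0$, so that case imposes nothing. For $\delta \neq 0$, normalising $\delta = 1$, one must ensure the monic quadratic $t^2 + p t + q$ (with $p,q$ the explicit expressions in $g,h$ that I will read off) is positive for all $t\ge 0$; since it is convex and monic with value $q$ at $t=0$, this happens iff either it has no real roots, or both roots are negative — and a short discriminant-and-sign analysis should show the combined condition reduces precisely to a single inequality $g + h^2/4 < 0$. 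I anticipate that $q$ itself will turn out to be $-(g+h^2/4)$ (up to the obvious bookkeeping), so that $q>0 \iff g+h^2/4<0$, and that whenever $q>0$ the linear coefficient $p$ automatically makes both roots negative (or the roots complex), closing the equivalence in both directions.

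The main obstacle is purely the bookkeeping in the determinant computation: getting the $4\times 4$ matrix of $L_x$ right from $(T_d)$ — in particular handling the single "non-quaternionic" entry $k^2 = g1 + hi$ correctly through the cofactor expansion — and then verifying that all the cross terms in $\alpha,\beta,\gamma$ really do organise into a clean quadratic in $t = \alpha^2+\beta^2+\gamma^2$ rather than leaving residual anisotropy among those three variables. Once that collapse is confirmed (it is forced, morally, by the $SO(3)$-type symmetry among $i,j,k$ in the quaternion part of the table), the remaining step is an elementary exercise on when a monic real quadratic is positive on $[0,\infty)$, and the stated criterion $g + h^2/4 < 0$ should fall out directly. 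If the cross-terms do not collapse as cleanly as hoped, the fallback is to invoke the previously proved $(T_p)$-criterion with the specialised parameters $a=d=-1$, $b=c=e=0$, $f=1$ coming from $(T_d)$, compute the forms $p(\gamma,\delta)$ and $q(\gamma,\delta)$ of that proposition, and check that positive-definiteness of $q$ together with the strict inequality there is again equivalent to $g+h^2/4<0$.
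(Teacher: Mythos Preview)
Your primary plan fails: $\det(L_x)$ does \emph{not} collapse to a function of $t=\alpha^2+\beta^2+\gamma^2$ and $\delta$ alone. The entry $k^2=g1+hi$ singles out the $i$-direction, so there is no $SO(3)$-type symmetry among $i,j,k$ in $(T_d)$ when $h\neq 0$, and a cross-term $h\gamma\delta$ survives. What the paper observes instead is that the determinant \emph{factors}:
\[
\det(L_x)=(\alpha^2+\beta^2+\gamma^2+\delta^2)\,(\alpha^2+\beta^2+\gamma^2-g\delta^2-h\gamma\delta).
\]
The first factor is the Euclidean norm, positive for $x\neq 0$, so the division criterion reduces at once to positive-definiteness of the second factor as a quadratic form in the four variables; Sylvester's criterion on the lower $2\times 2$ block gives $-g-h^2/4>0$, and the proof is done in two lines. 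This factorisation is the step your plan does not anticipate.

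Your fallback via the $(T_p)$-criterion does work, and in fact more easily than you expect: with $a=d=-1$, $b=c=e=0$, $f=1$ one computes $q(\gamma,\delta)=\gamma^2-h\gamma\delta-g\delta^2$ and $p(\gamma,\delta)=q(\gamma,\delta)+(\gamma^2+\delta^2)$, so whenever $q$ is positive definite one has $p>0>-2\sqrt{q\cdot(\gamma^2+\delta^2)}$ automatically, and the whole criterion collapses to positive-definiteness of $q$, i.e.\ $g+h^2/4<0$. This is correct but is a detour compared with the direct factorisation the paper uses.
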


\begin{proof}
Let $L_x$ denote the operator of left multiplication by $x=\alpha 1+\beta i+\gamma w+\delta v$ where $\alpha, \beta, \gamma, \delta \in \mathbb R$. Then the matrix of $L_x$ with respect to the above 
basis has the form: 
$$
\left( \begin{array}{cccc}
\alpha & -\beta & -\gamma & g\delta \\
\beta  & \alpha & \delta & h\delta - \gamma \\
\gamma & -\delta & \alpha & \beta \\
\delta & \gamma & -\beta & \alpha
\end{array}\right).$$

Routine calculations show that $$\det(L_x) = (\alpha^2 +\beta^2 + \gamma^2 + \delta^2) (\alpha^2 +\beta^2 +\gamma^2 -g\delta^2-h\gamma\delta).$$
Recall that $\mathcal A$ is a division algebra if and only if $\det(L_x)\neq 0$ whenever $x\neq 0$. Notice that $(\alpha^2 + \beta^2 +\gamma^2+\delta^2)\neq 0$ whenever $x\neq 0$.
Therefore, we need to find necessary and sufficient conditions on the parameters $g, h$ that would guarantee $q(\alpha, \beta, \gamma, \delta) = \alpha^2 +\beta^2 +\gamma^2 -g\delta^2-h\gamma\delta $ to be nonzero for $x\neq 0$.  
For this, we re-write the quadratic form $q$ as
$$ q(\alpha, \beta, \gamma, \delta) = \left(\begin{array}{cccc}\alpha&  \beta& \gamma& \delta\end{array}\right) 
\left( \begin{array}{cccc}
1&    0&    0&     0\\
0&    1&    0&     0\\
0&    0&    1&   -h/2 \\
0&    0&    -h/2 & -g
\end{array}\right)
\left(\begin{array}{c}
\alpha\\
\beta\\
\gamma\\
\delta
\end{array}\right).
$$
Clearly, $q$ is a positive-definite quadratic form if and only if the determinate of  $\left(\begin{array}{cc}
                                                                                                                                    1& -h/2\\
                                                                                                                                 -h/2 & -g
                                                                                                                                \end{array}\right)$ is positive, that is, $g+h^2/4 < 0$ as needed.
        \end{proof} 
\par\medskip

\begin{corollary}

Let $\mathcal A$ be a four-dimensional partially alternative unital RDA  that possesses a reflection. Assume that $\mathcal A$ has at least three imaginary units.  Then the following affirmations are equivalent.
\begin{enumerate}
\item $A$ is associative.
\item $A$ is alternative.
\item $A$ is defined by table $(T_d)$ with $h=0$ and $g=-1$.
\item $A$ is isomorphic to the real quaternion algebra.
\end{enumerate}
\end{corollary}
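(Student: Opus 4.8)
The plan is to establish the cycle of implications $(4)\Rightarrow(1)\Rightarrow(2)\Rightarrow(3)\Rightarrow(4)$, exploiting the fact that by Theorem \ref{CanonTable} the algebra $\mathcal A$ must be presented by the table $(T_d)$ for some $g,h\in\mathbb R$, and by Proposition \ref{prop} it is division exactly when $g+h^2/4<0$. The implications $(4)\Rightarrow(1)$ and $(1)\Rightarrow(2)$ are immediate, since the real quaternion algebra is associative and associativity implies alternativity. The implication $(3)\Rightarrow(4)$ is precisely the last sentence of Theorem \ref{CanonTable} (and of the preceding lemma): with $h=0$ and $g=-1$ the table $(T_d)$ is literally the quaternion multiplication table on $\{1,i,j,k\}$, so $\mathcal A\cong\mathbb H$.

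The substantive step is therefore $(2)\Rightarrow(3)$: if $\mathcal A$ is alternative, then in its table $(T_d)$ one must have $h=0$ and $g=-1$. First I would note that an alternative algebra is in particular partially alternative, so Theorem \ref{CanonTable} applies and $\mathcal A$ is indeed given by some $(T_d)$. The idea is to use a genuine (not merely partial) alternativity identity involving the basis element $k$, which is the one element of $\{i,j,k\}$ not appearing as an imaginary unit of $\mathcal C$-type, and whose square $g1+hi$ carries the two unknown parameters. Concretely, alternativity gives the flexible law $k(ik)=(ki)k$ and the left/right alternative laws such as $k(kj)=k^2 j$ and $(jk)k=jk^2$; expanding any one of these using $(T_d)$ produces a linear relation forcing $h=0$, after which $k^2=g1$ and the argument in the lemma preceding the corollary (or Proposition \ref{prop}, which already requires $g<0$ for the division property, combined with the computation $u(ui)=u^2i=-i$ for $u=k/\sqrt{-g}$) pins down $g=-1$. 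Alternatively, and perhaps more cleanly, one can invoke that an alternative real division algebra is, by Zorn's theorem, isomorphic to $\mathbb R$, $\mathbb C$, $\mathbb H$, or $\mathbb O$; dimension $4$ leaves only $\mathbb H$, giving $(2)\Rightarrow(4)$ directly and hence, by comparing with the table, $(2)\Rightarrow(3)$.

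The main obstacle — really a matter of bookkeeping rather than genuine difficulty — is to verify that some specific alternativity identity on the basis $\{1,i,j,k\}$ genuinely detects the parameter $h$, i.e. that the relation it yields is nontrivial. Since every associator of the form $(x,x,y)$, $(x,y,x)$, $(y,x,x)$ with $x\in\{i,j\}$ already vanishes by partial alternativity (as $\pm i$ and $\pm j$ together with the $\beta i+\gamma j$ are imaginary units when $h\neq 0$, per the lemma), one must be careful to pick an identity that involves $k$ essentially; the computations $k(kj)-k^2 j$ and $(jk)k-jk^2$ serve this purpose, and a short direct expansion shows their vanishing is equivalent to $h=0$. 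With that in hand the chain of equivalences closes.
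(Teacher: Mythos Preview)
Your proposal is correct and reaches the conclusion by a mildly different route. The paper runs the cycle in the opposite direction, $(3)\Rightarrow(2)\Rightarrow(1)\Rightarrow(4)\Rightarrow(3)$, with the one substantive step being $(1)\Rightarrow(4)$: associativity gives $k(ij)=(ki)j$, which in table $(T_d)$ reads $-k^{2}=(-j)j=-j^{2}$, hence $k^{2}=-1$ and so $g=-1$, $h=0$ in one stroke. Your substantive step is instead $(2)\Rightarrow(3)$ via the left-alternative law $k(kj)=k^{2}j$; note that this single identity already forces \emph{both} parameters simultaneously (since $k(kj)=ki=-j$ while $k^{2}j=gj-hk$, giving $(g+1)j-hk=0$), so your subsequent detour through the lemma or the division inequality to recover $g=-1$ is unnecessary. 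Your orientation of the cycle is arguably cleaner, because its ``obvious'' links $(4)\Rightarrow(1)\Rightarrow(2)$ really are obvious, whereas the paper's claimed-obvious $(2)\Rightarrow(1)$ tacitly leans on Zorn's theorem (or on a computation equivalent to yours). Finally, as you anticipated, the flexible identity $k(ik)=(ki)k$ is vacuous in $(T_d)$: both sides equal $i$ for all $g,h$, so it carries no information---your choice of $(k,k,j)$ or $(j,k,k)$ is exactly what is needed.
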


\begin{proof}
As proven, $\mathcal A$ posseses a basis $\{1, i, j, k\}$ such that the multiplication table of $\mathcal A$ takes the form $(T_d)$ for some real $g, h$ satisfying the division condition $g+h^2/4 < 0$.
Hence, the implication $(3) \Rightarrow (2)$ is a straightforward verification. The implications $(2)\Rightarrow (1)$ and $(4)\Rightarrow (3)$ are obvious. 

Let us now show that $(1) \Rightarrow (4)$ holds. Indeed, if $\mathcal A$ is associative, then in particular $k(ij) = (ki) j$ which implies that $k^2 =j^2$. Thus, we have that $g1 + h i = -1$, and $g=-1, h=0$ as needed. The proof is now complete.
\end{proof}

\section{Isomorphism classes and automorphisms of partially alternative RDAs. }

This section aims to classify four-dimensional partially alternative unital RDAs up to isomorphism and characterize their automorphism groups. Classical results  constrain such classifications under strict associativity or alternativity:

1. Frobenius’ theorem isolates $\mathbb{H}$ (quaternions) as the unique 4-dimensional real associative division algebra.

2. Hurwitz’s theorem extends this uniqueness to the alternative case, preserving $\mathbb{H}$ as the only 4-dimensional alternative division algebra.

However, if we replace the strict alternativity condition with the weaker notion of \emph{partial alternativity}, the resulting algebraic structures differ radically from those governed by classical theorems. This relaxation allows for infinitely many non-isomorphic four-dimensional algebras, breaking the uniqueness enforced by Frobenius’ and Hurwitz’s foundational results.
\par\medskip

In what follows let us denote an algebra $\mathcal A$ with multiplication table $(T_d)$ involving structural constants $g, h$ by $\mathcal A_{g,h}$. 
\par\medskip
\noindent{\bf Remark.} 
Let $\mathcal A$ and $\mathcal A'$ be two real unital algebras with ${\mathcal I}_{\mathcal A}$ and ${\mathcal I}_{\mathcal A'}$ being their sets of imaginary units, respectively. Let $\varphi: \mathcal A \to \mathcal A'$ be an algebra homomorphism with $\varphi(1)=1$.  Then $\varphi( {\mathcal I}_{\mathcal A}) \subseteq  {\mathcal I}_{\mathcal A'}.$ Indeed, 
 if $u\in \mathcal I_{\mathcal A}$, then $u^2=-1$. Applying $\varphi$, we obtain $$ \varphi(u)^2=\varphi(u^2)  = \varphi(-1)=-\varphi(1)=1,$$ that is, $\varphi(u)^2 = -1$. Thus, $\varphi(u) \in \mathcal I_{\mathcal A'}$.

\par\medskip

\begin{theorem} \label{isomcond}
Let $A_{g, h}$ and $A_{g', h'}$ be two partially alternative division algebras given by multiplication table $(T_d)$ corresponding to parameters $g, h$ and $g', h'$, respectively.  Then the algebras are isomorphic if and only if $g=g'$ and $h = \pm h'$.
\end{theorem}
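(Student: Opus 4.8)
The plan is to prove both directions by systematically tracking how an algebra isomorphism must act on the distinguished subalgebra $\mathcal C = \operatorname{span}_{\mathbb R}\{1,i\}$ and on the hyperplane $\operatorname{span}_{\mathbb R}\{i,j,k\}$ that contains all the imaginary units. First I would handle the easy direction: given $\mathcal A_{g,h}$, construct an explicit isomorphism onto $\mathcal A_{g,-h}$. The natural candidate is the linear map fixing $1$ and sending $i\mapsto -i$, $j\mapsto j$, $k\mapsto -k$ (equivalently $i\mapsto -i$ and acting as complex conjugation on $\mathcal C$ while suitably adjusting $j,k$); one checks on the multiplication table $(T_d)$ that this sends the structure constant $h$ to $-h$ and leaves $g$ unchanged. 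So $g=g'$, $h=\pm h'$ is sufficient.

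For the converse — the substantive direction — suppose $\psi:\mathcal A_{g,h}\to\mathcal A_{g',h'}$ is an algebra isomorphism with $\psi(1)=1$. By the Remark preceding the theorem, $\psi$ maps $\mathcal I_{\mathcal A_{g,h}}$ bijectively onto $\mathcal I_{\mathcal A_{g',h'}}$; combined with the Lemma that $\mathcal I_{\mathcal A}\subseteq\operatorname{span}_{\mathbb R}\{i,j,k\}$, and the description $\mathcal I_{\mathcal A_{g,h}}=\{\beta i+\gamma j:\beta^2+\gamma^2=1\}$ (for $h\neq 0$) from the earlier lemma, $\psi$ must carry the ``$ij$-circle'' of one algebra onto the ``$ij$-circle'' of the other. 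In particular $\psi$ restricts to a linear isomorphism $\operatorname{span}_{\mathbb R}\{i,j\}\to\operatorname{span}_{\mathbb R}\{i,j\}$ sending the unit circle to the unit circle, hence is an orthogonal map of this plane; so $\psi(i)$ and $\psi(j)$ are orthonormal in the standard metric, and $\psi(k)$, being determined as (a scalar multiple of) a product of two such units up to sign, is forced as well. One then writes $\psi$ as an explicit matrix with a small number of free parameters (an angle $\theta$ for the rotation/reflection of the $ij$-plane, plus sign choices), substitutes into the defining relations of $(T_d)$ — most crucially the relation $k^2 = g1+hi$ and the mixed relations $ki = -j$, $ik = j$, $kj = i$, $jk = -i$ — and reads off constraints. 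The relation $k^2 = g1+hi$ under $\psi$ yields $\psi(k)^2 = g1 + h\,\psi(i)$; comparing with $k'^2 = g'1+h'i$ in the target and using that $\psi(i) = \pm i$ (forced once one checks $\psi$ must send the subalgebra $\mathcal C$ to $\mathcal C$, since $\mathcal C$ is characterized as the set of imaginary units commuting with a maximal set, or via the eigenspace decomposition of Corollary \ref{ESD}) gives $g=g'$ and $h = h'$ or $h = -h'$.

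The main obstacle I anticipate is pinning down $\psi$ on $\mathcal C$ and on $k$ tightly enough: a priori $\psi$ could mix $\{1,i,j,k\}$ in a complicated way, and the argument that $\psi(\mathcal C)=\mathcal C$ and that $\psi$ preserves the eigenspace decomposition $\mathcal E_{-1}(i)\oplus\mathcal E_{+1}(i)$ needs care — the point is that $\mathcal C$ is intrinsic (it is the commutative nucleus's interaction with $\mathcal I_{\mathcal A}$, or equivalently $\mathcal C$ is spanned by $1$ together with any imaginary unit central in $\mathcal E_{-1}$), so an isomorphism must preserve it, but making this rigorous requires invoking Proposition \ref{PAnuc} and the structure $\mathcal A = \mathcal C\oplus\mathcal B$ to show $\mathcal C$ is the unique such subalgebra. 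A secondary technical point is the degenerate case $h=h'=0$, where both algebras collapse to $\mathbb H$ by the earlier corollary and the statement is immediate; one should dispose of this first so that in the main argument one may assume $h,h'\neq 0$ and freely use the circular description of $\mathcal I_{\mathcal A}$. Once $\psi$ is constrained to an orthogonal transformation of $\operatorname{span}_{\mathbb R}\{i,j\}$ together with $\psi(i)=\pm i$, the remaining verification that $\theta$ must be $0$ or $\pi$ (so that $\psi(j)=\pm j$) and that the sign pattern forces exactly $g=g'$, $h=\pm h'$ is a short finite computation.
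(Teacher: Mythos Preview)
Your overall plan matches the paper's: dispose of the case $h=0$ first (both algebras are $\mathbb H$), then use that an isomorphism $\psi$ sends imaginary units to imaginary units, hence $\psi(i),\psi(j)$ land on the circle $\{\beta i'+\gamma j':\beta^2+\gamma^2=1\}$, compute $\psi(k)=\psi(j)\psi(i)$, and extract constraints from $\psi(k)^2=\psi(k^2)=g\cdot 1+h\,\psi(i)$. The explicit map $i\mapsto -i'$, $j\mapsto j'$ for the easy direction is exactly what the paper uses.

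Where you diverge is in wanting to establish $\psi(\mathcal C)=\mathcal C$ (equivalently $\psi(i)=\pm i'$) \emph{before} the main computation, via an intrinsic characterization of $\mathcal C$. With the tools you name this does not work: for \emph{every} imaginary unit $u$ in $\mathcal A_{g,h}$ one has $\dim\mathcal E_{-1}(u)=2=\dim\mathcal E_{+1}(u)$, so Corollary~\ref{ESD} does not single out $i$ among the circle of units; Proposition~\ref{PAnuc} only says no imaginary unit is central; and the splitting $\mathcal A=\mathcal C\oplus\mathcal B$ comes from a \emph{chosen} reflection, which an arbitrary isomorphism has no reason to respect. The paper sidesteps this entirely. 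Writing $\psi(i)=\beta' i'+\gamma' j'$ and $\psi(j)=\beta'' i'+\gamma'' j'$, one finds $\psi(k)=\psi(j)\psi(i)\in\operatorname{span}_{\mathbb R}\{1,k'\}$, so $\psi(k)^2\in\operatorname{span}_{\mathbb R}\{1,i',k'\}$ has no $j'$-component; equating with $g\cdot1+h(\beta'i'+\gamma'j')$ forces $h\gamma'=0$, hence $\gamma'=0$ and $\psi(i)=\pm i'$ drops out automatically. A further coefficient comparison (the paper's equations $(1)$--$(3)$) then forces $\beta''=0$, $(\gamma'')^2=1$, and thus $g=g'$, $h=\pm h'$. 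In short, the ``main obstacle'' you flag dissolves: carry out the substitution you already planned, and $\psi(i)=\pm i'$ is one of the constraints you read \emph{off}, not a hypothesis you need going \emph{in}.
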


\begin{proof}
Assume that  $A_{g, h}$ and $A_{g', h'}$ are isomorphic. Let $\{1, i, j, k\}$ and $\{1, i', j', k'\}$ be two bases of $A_{g, h}$ and $A_{g', h'}$ with respect to which the algebras 
have multiplication tables of type $(T_d)$. 

 Consider the following possibilities. If $h=0$, then by Lemma 3 $g=-1$ and $A_{g, h}$ is the real quaternion algebra. Hence, by isomorphism,
$A_{g', h'}$ is the real quaternion algebra too, and, therefore, $h'=0$, $g'=-1$. Thus, the required condition holds true. 

Consider the case when $h\neq 0$, and, in particular, $\mathcal A_{g, h}$ is not isomorphic to the real quaternion algebra. It follows that $h'\neq 0$ since, otherwise, ${\mathcal A}_{g', h'}$ would be isomorphic to $\mathbb H$.

We observe that $A_{g,h}$ is generated as an algebra by the basis elements $i, j$.

Let $f: A_{g, h}\to A_{g', h'}$ be an algebra isomorphism.  By the above remark, we have that $f({\mathcal I}_{{\mathcal A}_{g, h}}) \subseteq {\mathcal I}_{{\mathcal A}_{g', h'}}$. 
Recall that if $h'\neq 0$, then $ \mathcal I_{{\mathcal A}_{g', h'}} = \{  x i' + y j' \,|\, x^2 + y^2 = 1\}.$ 

This implies that  both $f(i)$ and $f(j)$ belong to $ \mathcal I_{{\mathcal A}_{g', h'}}$, and, hence, 
$f(i) = \beta' i' + \gamma' j'$ and $f(j) = \beta'' i' + \gamma'' j'$ where $\beta'^2 +\gamma'^2 = 1$ and $\beta''^2 +\gamma''^2 = 1$.  

Since $k=ji$, we have that 
\begin{align*}
&f(k) = f(ji) = f(j) f(i) = (\beta'' i' + \gamma'' j'') (\beta' i' + \gamma' j')\\
&= (-\beta'\beta'' - \gamma'\gamma'' )1 + (\beta'\gamma'' - \gamma'\beta'') k'.
\end{align*}

Recall that $k^2 = g1+hi$ in $\mathcal A_{g, h}$. Then applying $f$ to $g1 + hi$ and $k^2$ separately, we obtain: 
\begin{align*}
& f(g1+hi)= g 1 + h f(i) = g1 + h \beta' i' + h \gamma' j' \,\, \text{equals\,to}  \\
& f(k^2)= f(k)^2 =  ( (-\beta' \beta'' - \gamma'\gamma'') 1 + (\beta' \gamma'' - \gamma' \beta'') k' )^2.
\end{align*}

We note that the expression for $f(k^2)$ contains no terms involving the basis element $j'$. This immediately implies that  the term $h \gamma' j'$ in $f(g1+hi)$ must be zero too. Hence, $\gamma'=0$ as $h\neq 0$. Since 
$\beta'^2 +\gamma'^2 = 1$, we have that $\beta'^2=1$. 
Thus, 
\begin{align*}
&f(k^2)= f(k)^2 =  ( (-\beta' \beta'' - \gamma'\gamma'') 1 + (\beta' \gamma'' - \gamma' \beta'') k' )^2\\
&= (\beta')^2 (-\beta'' 1 + \gamma'' k')^2 = (\beta'')^2 - 2 \beta''\gamma'' k' + (\gamma'' )^2 k'^2\\
&=  (\beta'')^2 - 2 \beta''\gamma'' k' + (\gamma'' )^2 (g' 1 + h' i')\\
& = ( ( \beta'')^2 + (\gamma'')^2 g')1 + h' (\gamma'')^2 i'  - 2 \beta''\gamma'' k'.
\end{align*}

Comparing the latter with the expression for  $f(g1+hi)$, we obtain the following equations:

\begin{align*}
 & (\beta'')^2 + (\gamma'')^2 g' = g, \tag{1}\\
 & (\gamma'')^2 h' = h\beta', \tag{2} \\
& \beta'' \gamma'' = 0. \tag{3}
\end{align*}

If $\gamma''=0$, then as follows from the second equation, $\beta'=0$. This means that $f(i) = \beta' i + \gamma' j$ is zero. This is impossible.

Thus, $\beta''=0$. Immediately, $(\gamma'')^2 = 1$.  This yields the required conditions on the parameters: $h' = \pm h$ and $g' =g$. 

Besides, we showed that if $f: A_{g, h}\to A_{g', h'}$ is an algebra isomorphism, then $f(i) = \beta' i'$ and $f(j)= \gamma'' j'$ where $(\beta')^2 = (\gamma'')^2 = 1.$

Conversely, if  the conditions  $g=g'$ and $h=\pm h'$ hold true, then either $A_{g, h}$ and $A_{g', h'}$ have the identical multiplication tables or we can use the mapping $f:\mathcal A_{g, h}\to \mathcal A_{g', h'}$
defined by $f(i)=-i'$ and $f(j)=j'$ to map isomorphically one algebra onto another one.

The proof is complete.

\end{proof}

\begin{corollary}
Let  $\mathcal A_{g, h}$ be a unital partially alternative division algebra with $h\neq 0$, and $f: \mathcal A_{g, h}\to \mathcal A_{g, h}$ be its nontrivial automorphism.
Then $f(i)=i$ and $f(j)=-j.$
\end{corollary}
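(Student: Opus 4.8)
The plan is to specialize the analysis in the proof of Theorem \ref{isomcond} to the case $g = g'$ and $h = h'$, i.e. to an automorphism $f : \mathcal A_{g,h} \to \mathcal A_{g,h}$ with $h \neq 0$. Everything needed is already contained in that proof: running the same argument with the same multiplication table on both sides, we established that any isomorphism (in particular any automorphism) must satisfy $f(i) = \beta' i$ and $f(j) = \gamma'' j$ with $(\beta')^2 = (\gamma'')^2 = 1$, so $\beta' = \pm 1$ and $\gamma'' = \pm 1$. Thus there are at most four candidate automorphisms, corresponding to the four sign choices.

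Next I would eliminate the two mixed sign choices. The key constraint is equation $(2)$ from the proof of Theorem \ref{isomcond}, namely $(\gamma'')^2 h' = h \beta'$, which in the present setting ($h' = h \neq 0$) reads $h = h\beta'$, forcing $\beta' = 1$. Hence $f(i) = i$ automatically, and the only freedom is in the sign $\gamma'' = \pm 1$, giving $f(j) = j$ or $f(j) = -j$. The choice $\gamma'' = 1$ yields $f(i) = i$, $f(j) = j$, and since $\{i,j\}$ generates $\mathcal A_{g,h}$ as an algebra (as noted in the proof of Theorem \ref{isomcond}), this forces $f = \operatorname{id}$, the trivial automorphism. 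Therefore a \emph{nontrivial} automorphism must have $\gamma'' = -1$, i.e. $f(i) = i$ and $f(j) = -j$.

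Finally, for completeness I would check that the assignment $f(i) = i$, $f(j) = -j$ (and hence $f(1) = 1$, $f(k) = f(ji) = f(j)f(i) = -ji = -k$) does extend to a genuine algebra automorphism: one verifies it respects the multiplication table $(T_d)$, the only nontrivial relations to check being $k^2 = g1 + hi$ (and the relations $ij = -k$, $ji = k$, $i^2 = j^2 = -1$, etc.), which hold since $f(k)^2 = (-k)^2 = k^2 = g1 + hi = g1 + h f(i)$. This confirms the nontrivial automorphism exists and is unique, so $f(i) = i$ and $f(j) = -j$ as claimed.

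I do not anticipate a serious obstacle here, since the statement is essentially a corollary extracted from the already-completed case analysis in Theorem \ref{isomcond}; the only point requiring a little care is invoking equation $(2)$ correctly to pin down $\beta' = 1$ rather than merely $\beta' = \pm 1$, and confirming that the remaining sign genuinely gives an automorphism rather than just a linear map preserving the distinguished basis.
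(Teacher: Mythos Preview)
Your proposal is correct and follows essentially the same approach as the paper's proof: both invoke the conclusion of Theorem~\ref{isomcond} that $f(i)=\beta' i$, $f(j)=\gamma'' j$ with $(\beta')^2=(\gamma'')^2=1$, then use equation~(2) with $h'=h\neq 0$ to force $\beta'=1$, and finally observe that $\gamma''=1$ gives the identity. Your additional verification that $f(i)=i$, $f(j)=-j$ genuinely extends to an automorphism is a nice completeness check that the paper leaves implicit (it is effectively covered by the converse direction of Theorem~\ref{isomcond}).
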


\begin{proof} As follows from the proof of Theorem \ref{isomcond}, $f(i) = \beta' i$ and $f(j)= \gamma'' j$ where $(\beta')^2 = (\gamma'')^2 = 1.$ 

If $f(i)=-i$, that is, $\beta'=-1$, then 
the equation (2) becomes $ (\gamma'')^2 h = -h$, i.e. $(\gamma'')^2 = -1$, impossible.

Thus, $\beta'=1$, $f(i)=i$. If $f(j)=j$, then this yields the trivial automorphism (the identity automorphism). Hence, the only nontrivial automorphism is given by the conditions  
$f(i)= i$ and $f(j)= -j$. The proof is complete.
\end{proof}

We summarize the automorphism results for $\mathcal A_{g, h}$ in the theorem below.

\begin{theorem} \label{PAaut}
Let $\mathcal A_{g, h}$ be a unital partially alternative division algebra, and $Aut(\mathcal A_{g, h})$ denote its group of automorphisms. Then any automorphism from 
$Aut(\mathcal A_{g, h})$ is inner. Moreover, 

1) If $h=0$, then $\mathcal A_{g, h}$ is isomorphic to the real quaternion algebra, and  $Aut(\mathcal A_{g, h})\cong SO(3)$.

2) If $h\neq 0$, then $Aut(\mathcal A_{g, h}) \cong \mathbb Z_2.$ In this case, the only nontrivial automorphism is a reflection.

\end{theorem}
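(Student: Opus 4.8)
The plan is to treat the two cases of the stated dichotomy separately, drawing on Lemma~3, Corollary~\ref{ESD}, and the corollary immediately preceding this theorem. When $h=0$, Lemma~3 gives $g=-1$ and $\mathcal A_{g,h}\cong\mathbb H$, so the statement reduces to recalling the structure of $\operatorname{Aut}(\mathbb H)$. Since $\mathbb H$ is a central simple associative $\mathbb R$-algebra and every unital $\mathbb R$-algebra automorphism fixes the centre $\mathbb R1$ pointwise, the Skolem--Noether theorem shows each automorphism is inner, i.e.\ of the form $x\mapsto qxq^{-1}$. Restricting to unit quaternions yields a surjection $S^3\to\operatorname{Aut}(\mathbb H)$, $q\mapsto(x\mapsto qxq^{-1})$, with kernel the central units $\{\pm1\}$; since conjugation fixes $\mathbb R1$ and preserves the quadratic norm, it acts orthogonally on the three-dimensional space of pure quaternions, and the induced map $S^3\to SO(3)$ is the usual double cover. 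Hence $\operatorname{Aut}(\mathbb H)\cong S^3/\{\pm1\}\cong SO(3)$, with every automorphism inner.

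Now suppose $h\neq0$. First I would pin down the automorphism group as a set. By the preceding corollary, any nontrivial automorphism $f$ satisfies $f(i)=i$ and $f(j)=-j$, hence also $f(1)=1$ and $f(k)=f(ji)=f(j)f(i)=-k$; so there is at most one nontrivial automorphism. To see that one exists, observe that $\mathcal A_{g,h}=\mathcal C\oplus\mathcal B$ with $\mathcal C=\operatorname{span}_{\mathbb R}\{1,i\}$ and $\mathcal B=\operatorname{span}_{\mathbb R}\{j,k\}$ is a $\mathbb Z_2$-grading, since $\mathcal C\mathcal C\subseteq\mathcal C$, $\mathcal C\mathcal B=\mathcal B\mathcal C=\mathcal B$, and $\mathcal B\mathcal B=\mathcal C$; consequently the grading involution acting as $+\operatorname{id}$ on $\mathcal C$ and $-\operatorname{id}$ on $\mathcal B$ is an algebra automorphism, and it fixes $1,i$ and negates $j,k$, so it is exactly the map $f$ above. (Equivalently, one verifies multiplicativity of $f$ directly against $(T_d)$.) Therefore $\operatorname{Aut}(\mathcal A_{g,h})=\{\operatorname{id},f\}$; since $f\neq\operatorname{id}$ and $f^2=\operatorname{id}$, we conclude $\operatorname{Aut}(\mathcal A_{g,h})\cong\mathbb Z_2$ and $f$ is a reflection.

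It remains to exhibit $f$ as an inner automorphism. The imaginary unit $i$ is invertible with $i^{-1}=-i$, and partial flexibility $(i,x,i)=0$ makes $c_i\colon x\mapsto ixi^{-1}=-ixi$ a well-defined linear endomorphism of $\mathcal A_{g,h}$. Directly from the defining conditions of $\mathcal E_{+1}(i)$ and $\mathcal E_{-1}(i)$ in Corollary~\ref{ESD}, $c_i$ acts as $+\operatorname{id}$ on $\mathcal E_{-1}(i)$ and as $-\operatorname{id}$ on $\mathcal E_{+1}(i)$; a short computation with $(T_d)$ gives $iji=j$ and $iki=k$, so, together with $1,i\in\mathcal E_{-1}(i)$, we obtain $\mathcal E_{-1}(i)=\mathcal C$ and $\mathcal E_{+1}(i)=\mathcal B$. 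Hence $c_i$ fixes $1,i$ and negates $j,k$; that is, $c_i=f$. Thus the unique nontrivial automorphism of $\mathcal A_{g,h}$ is conjugation by $i$, in particular inner, and the proof is complete.

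I expect the only point needing genuine care to be the assertion that the candidate map $f$ --- equivalently the grading involution, equivalently $c_i$ --- really is an algebra automorphism of the \emph{non-associative} algebra $\mathcal A_{g,h}$; this is secured either via the $\mathbb Z_2$-grading $\mathcal A_{g,h}=\mathcal C\oplus\mathcal B$ or by a short finite check against $(T_d)$. Everything else is routine bookkeeping with the multiplication table $(T_d)$ together with the classical structure theory of $\mathbb H$.
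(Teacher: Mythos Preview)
Your proof is correct. The paper actually states this theorem without proof, presenting it as a summary of the preceding results; your argument supplies exactly the details one would expect, building on the previous corollary (which pins down $f(i)=i$, $f(j)=-j$ when $h\neq0$) and on standard structure theory of $\mathbb H$ when $h=0$.

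Two points worth noting. First, the paper never explicitly verifies that the candidate map $f$ with $f(i)=i$, $f(j)=-j$ is an automorphism, nor that it is inner; your use of the $\mathbb Z_2$-grading $\mathcal A_{g,h}=\mathcal C\oplus\mathcal B$ to get multiplicativity, followed by the identification $f=c_i\colon x\mapsto -ixi$ (well-defined by partial flexibility), cleanly fills both gaps. Second, your invocation of Skolem--Noether for $\mathbb H$ is the standard route and is what the paper is tacitly relying on for the $h=0$ case. So your approach is not so much different from the paper's as it is a fleshed-out version of what the paper leaves implicit.
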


\section{Lie Algebras Associated With  Partially Alternative Algebras}

In this section, we establish a strengthened version of Proposition 5.2 from \cite{HNT}. The proof is based on the division conditions introduced earlier. Recall that any four-dimensional unital partially alternative division algebra $\mathcal{A}$ endowed with a reflection naturally gives rise to a Lie algebra $\mathcal{L}(\mathcal{A})$: on the underlying vector space of $\mathcal{A}$ we define the Lie bracket by
$$ [x, y] = xy - yx, \quad x, y\in \mathcal A. $$

In \cite{HNT} the following classification result (Proposition 5.2) was proven.

\begin{proposition} \label{Prop5.2}  Let $\mathcal A$ be a four-dimensional partially alternative unital real division algebra with a reflection.
Let $\mathcal L (\mathcal A) = (\mathcal A,\, [\,\,,\,\,])$ be the Lie algebra associated with $\mathcal A$. Let $[v, w]= \alpha 1+\beta i$ where
$\alpha, \beta \in  \mathbb R$.  Write $\mathcal L = \mathbb R 1 + \mathcal I$ where $\mathcal I = \text{Span}_{\mathbb R}\{i, w, v\}$.
\begin{enumerate}
\item If $\alpha=0$ and $\beta \neq 0$, then $\mathcal I$ is a simple Lie ideal isomorphic to $\mathfrak{so}(3)$. Hence,   $\mathcal L \cong \mathfrak{g}_1 \oplus \mathfrak g_{3,7}.$ 
\item If $\alpha=\beta=0$, then $\mathcal L\cong \mathfrak g_1 \oplus \mathfrak g_{3, 5}.$ 
\item If $\alpha \neq 0$ and $\beta \neq 0$, then  $\mathcal L  \cong \mathfrak{g}_1 \oplus \mathfrak g_{3,7}.$ 
\item  If $\alpha\neq 0$ and $\beta = 0$, then $\mathcal L \cong \mathfrak g_{4,9}$ (with zero parameter).
\end{enumerate}
\end{proposition}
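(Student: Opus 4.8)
The plan is to compute the Lie bracket of $\mathcal L=\mathcal L(\mathcal A)$ from table $(T_p)$ and then run a short case analysis on the pair $(\alpha,\beta)$. In the basis $\{1,i,w,v\}$ the parameter-free part of $(T_p)$ gives $[1,-]=0$, $[i,w]=iw-wi=-2v$, $[i,v]=iv-vi=2w$, and the remaining entries give $[w,v]=wv-vw=(c-e)1+(d-f)i=-(\alpha1+\beta i)$. Hence $\mathcal L$ is determined, as an abstract Lie algebra, by $(\alpha,\beta)$ alone. Two facts hold for every $(\alpha,\beta)$: a direct check gives $Z(\mathcal L)=\mathbb R1$, and the derived ideal is $[\mathcal L,\mathcal L]=\operatorname{span}_{\mathbb R}\{w,v,\alpha1+\beta i\}$, which is $3$-dimensional unless $\alpha=\beta=0$.

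If $\alpha=\beta=0$, then $[w,v]=0$, so $\mathcal I=\langle i,w,v\rangle$ is an ideal complementing the central line $\mathbb R1$, containing the $2$-dimensional abelian ideal $\langle w,v\rangle$ on which $\operatorname{ad}_i$ acts with eigenvalues $\pm2\sqrt{-1}$; after rescaling this is $\mathfrak{iso}(2)=\mathfrak g_{3,5}$, whence $\mathcal L\cong\mathfrak g_1\oplus\mathfrak g_{3,5}$. If $\alpha\neq0$ and $\beta=0$, then $[w,v]=-\alpha1\in Z(\mathcal L)$, so $[\mathcal L,\mathcal L]=\langle w,v,1\rangle$ is a Heisenberg algebra and $\mathcal L=\langle w,v,1\rangle\rtimes\mathbb R i$, with $\operatorname{ad}_i$ annihilating $1$ and rotating $\langle w,v\rangle$: the $4$-dimensional oscillator algebra. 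It is indecomposable, since $Z(\mathcal L)=\mathbb R1$ lies inside $[\mathcal L,\mathcal L]$, which is incompatible with a splitting into two proper ideals; so it must be $\mathfrak g_{4,9}$ with zero parameter.

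If $\beta\neq0$, then $\mathfrak s:=[\mathcal L,\mathcal L]=\operatorname{span}_{\mathbb R}\{w,v,\alpha1+\beta i\}$ is a $3$-dimensional ideal that does not meet $\mathbb R1$, so $\mathcal L=\mathbb R1\oplus\mathfrak s$ with $\mathbb R1$ central; when $\alpha=0$ this ideal equals $\mathcal I=\langle i,w,v\rangle$, matching the formulation of case~(1). Moreover $\mathfrak s$ is perfect, since $[w,v]=-(\alpha1+\beta i)$, $[w,\alpha1+\beta i]=2\beta v$ and $[v,\alpha1+\beta i]=-2\beta w$ span $\mathfrak s$ whenever $\beta\neq0$. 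A perfect $3$-dimensional real Lie algebra is simple, hence isomorphic to $\mathfrak{so}(3)=\mathfrak g_{3,7}$ or to $\mathfrak{sl}_2(\mathbb R)=\mathfrak g_{3,6}$, and a rescaling of the three generators (equivalently, inspection of the Killing form) reduces the choice to the sign of $\beta$.

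The routine parts are the bracket computation and the identifications in the cases $\alpha=\beta=0$ and $\alpha\neq0=\beta$, each a direct match against the Mubarakzyanov list. The main obstacle is to show that in cases~(1) and~(3) the simple ideal is $\mathfrak{so}(3)$ rather than $\mathfrak{sl}_2(\mathbb R)$: the Lie bracket alone does not settle this, and one must feed in the division criteria of Section~3 — positive-definiteness of $q$ together with $p(\gamma,\delta)>-2\sqrt{q(\gamma,\delta)(\gamma^2+\delta^2)}$ for all $(\gamma,\delta)\neq(0,0)$ — in order to control the sign of $\beta$, and hence the signature of the Killing form of $\mathfrak s$. This is precisely the step in which the proof ``is based on the division conditions introduced earlier,'' and it is where the real work lies.
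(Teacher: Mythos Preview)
This proposition is not proved in the present paper: it is quoted verbatim from \cite{HNT} as their Proposition~5.2, and no argument is given here. Consequently there is no ``paper's own proof'' to compare against. The sentence you lift, ``The proof is based on the division conditions introduced earlier,'' refers to the \emph{strengthened} results of Section~6 (Propositions~\ref{PropForMoreUnits}--10), which rule out the case $\mathfrak g_1\oplus\mathfrak g_{3,5}$ for division algebras; it does not refer to Proposition~\ref{Prop5.2} itself.

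As for your argument: the bracket computations and the identifications in cases~(2) and~(4) are correct, and your reduction of cases~(1) and~(3) to the dichotomy $\mathfrak{so}(3)$ versus $\mathfrak{sl}_2(\mathbb R)$, governed by the sign of $\beta$ via the Killing form, is also correct. But this is a sketch, not a proof: you explicitly flag the elimination of $\mathfrak{sl}_2(\mathbb R)$ as ``where the real work lies'' and then do not do it. Asserting that the positive-definiteness of $q$ together with the inequality on $p$ forces $\beta=f-d>0$ is not the same as proving it; you give no mechanism linking those inequalities (in the eight parameters $a,\dots,h$) to the sign of $f-d$. Until that implication is actually established, cases~(1) and~(3) remain open in your write-up, and the proposal does not constitute a proof of the proposition.
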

\noindent In the above proposition $\mathfrak g_1 \oplus \mathfrak g_{3, 5}$, $ \mathfrak{g}_1 \oplus \mathfrak g_{3,7}$ and $\mathfrak g_{4,9}$ denote the four-dimensional Lie algebras involved in Mubarakzyanov's classification of low-dimensional real Lie algebras \cite{Mub}.
\par\medskip

We next consider two possibilities for $\mathcal A$: either  $\mathcal{I_A}\setminus\mathcal{C}\neq \varnothing$, that is, $\mathcal{A}$ admits at least three imaginary units, or
$\mathcal{I_A}\setminus\mathcal{C} = \varnothing,$ that is, $\mathcal{I_A} = \{\pm i\}$.

\begin{proposition}\label{PropForMoreUnits}
Let $\mathcal{A}$ be a four-dimensional partially alternative real division algebra with a reflection. If $\mathcal{I_A}\setminus\mathcal{C}\neq \varnothing$, that is, $\mathcal{A}$ admits at least three imaginary units, then $ \mathcal L (\mathcal A)\cong  \mathfrak{g}_1 \oplus \mathfrak g_{3,7}.$
\end{proposition}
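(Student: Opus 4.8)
The plan is to reduce to the canonical multiplication table $(T_d)$ and then exhibit an explicit simple Lie ideal. By Theorem \ref{CanonTable}, the hypothesis that $\mathcal A$ has at least three imaginary units (together with the reflection) gives a basis $\{1,i,j,k\}$ with multiplication table $(T_d)$, where $g+h^2/4<0$ by Proposition \ref{prop}. So it suffices to compute $\mathcal L(\mathcal A)$ directly from $(T_d)$. From the table one reads off the brackets of the imaginary part $\mathcal I=\operatorname{span}_{\mathbb R}\{i,j,k\}$:
\begin{align*}
[i,j] &= ij-ji = -k-k = -2k,\\
[j,k] &= jk-kj = -i-i = -2i,\\
[k,i] &= ki-ik = j-(-j) = 2j.
\end{align*}
In particular $[\mathcal I,\mathcal I]\subseteq\mathcal I$ (note $k^2-k^2=0$ contributes nothing), so $\mathcal I$ is a subalgebra, and the structure constants show $\mathcal I\cong\mathfrak{so}(3)$ — it is, up to the scaling $i\mapsto i/2$ etc., the standard cross-product algebra. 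Since $\mathcal I$ is three-dimensional simple and $1$ is central, $\mathcal L(\mathcal A)=\mathbb R1\oplus\mathcal I$ is a direct sum of Lie algebras, whence $\mathcal L(\mathcal A)\cong\mathfrak g_1\oplus\mathfrak g_{3,7}$ in Mubarakzyanov's notation.

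The only point requiring a little care is that $\mathcal I$ really is a Lie ideal of $\mathcal L(\mathcal A)$, not merely a subalgebra; this follows because $1$ is the unit, so $[1,x]=0$ for all $x$, and hence $[\mathcal L,\mathcal I]=[\mathcal I,\mathcal I]\subseteq\mathcal I$. Alternatively one can simply observe that $\mathcal L(\mathcal A)=\mathbb R1\oplus\mathcal I$ as a direct sum of ideals since the first summand is central and $\mathcal I$ is closed under the bracket. I would also double-check that the bracket on $\mathcal I$ does not secretly depend on $g,h$: the potentially dangerous bracket is $[k,k]$, which vanishes identically, so the values of $g$ and $h$ never enter the Lie structure — consistent with Proposition \ref{Prop5.2}, since here $[v,w]$ translates to a nonzero multiple of $i$ with zero $1$-component, placing us in case (1).

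I do not anticipate a genuine obstacle here; the proposition is essentially a corollary of the normal form $(T_d)$, and the main content is the routine verification that the nine bracket values of $\{i,j,k\}$ reproduce the $\mathfrak{so}(3)$ relations. If one wished to avoid invoking Theorem \ref{CanonTable} in full, an alternative route would be to work from the table $(T_p)$ and the additional imaginary unit to directly identify $[v,w]=\alpha1+\beta i$ with $\alpha=0,\beta\neq0$, thereby landing in case (1) of Proposition \ref{Prop5.2}; but since Theorem \ref{CanonTable} is already available, the shortest path is to read everything off $(T_d)$.
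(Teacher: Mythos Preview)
Your argument is correct and matches the paper's approach: both reduce to table $(T_d)$ via Theorem~\ref{CanonTable}, after which the paper simply reads off $[v,w]=\alpha 1+\beta i$ with $\alpha=0$, $\beta\neq 0$ and invokes case~(1) of Proposition~\ref{Prop5.2}, while you equivalently compute all three brackets to exhibit $\mathfrak{so}(3)$ directly. One minor slip: from $(T_d)$ one has $ki=-j$ and $ik=j$, so $[k,i]=-2j$ (not $+2j$); the conclusion is unaffected, since the cyclic relations $[i,j]=-2k$, $[j,k]=-2i$, $[k,i]=-2j$ still yield $\mathfrak{so}(3)$ after rescaling.
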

\begin{proof}
Under the above conditions, Theorem \ref{CanonTable} guarantees a basis $\{1, i, v, w\}$ constructed as in Proposition \ref{Prop5.2}, relative to which the multiplication table of $\mathcal A$ is given by $(T_d)$. It follows that 
$[v, w] = -i$, that is, $\alpha = 0$ and $\beta=-1$. Therefore, by Proposition \ref{Prop5.2},  $\mathcal{L}(\mathcal{A}) \cong \mathfrak{g}_1 \oplus \mathfrak g_{3,7}.$
\end{proof}

\begin{example}
There exists a four-dimensional partially alternative real division algebra $\mathcal{A}$ with a reflection and exactly two imaginary units,  whose associated Lie algebra  $\mathcal L (\mathcal A)$ is isomorphic to $\mathfrak g_1\oplus \mathfrak g_{3,7}$.
\end{example}
\begin{proof}
Set $a=-2$, $b=h=2$, $c=e=0$, $d=g=-1$, and $f=1$ in the multiplication table $(T_p)$. Then, the quadratic forms in question becomes
$$p(\gamma,\delta)=3\gamma^2+2\delta^2,$$
$$q(\gamma,\delta)=2\gamma^2-2\gamma\delta+\delta^2,$$
$$s(y,z)=2y^2+2z^2,$$
which are all positive definite. Hence, we obtain a four-dimensional partially alternative real division algebra $\mathcal{A}$ satisfying $\mathcal{I_A}=\{\pm i\}$. Moreover, $[v,w]=vw-wv=(e-c)1+(f-d)i$. Since $f-d\neq 0$, the associated Lie algebra of $\mathcal{A}$ is isomorphic to $\mathfrak g_1\oplus \mathfrak g_{3,7}$.
\end{proof}

\begin{example}
There exists a four-dimensional partially alternative real division algebra $\mathcal{A}$ with a reflection and exactly two imaginary units, whose associated Lie algebra $ \mathcal L (\mathcal A)$ is isomorphic to $\mathfrak g_{4,9}$ (with parameter zero).
\end{example}
\begin{proof}
Set $a=c=g=-5$, $b=2$, $d=f=-1$, and $e=h=3$ in the multiplication table $(T_p)$. Then, the quadratic forms in question becomes
$$p(\gamma,\delta)=6\gamma^2+\gamma\delta+4\delta^2,$$
$$q(\gamma,\delta)=15\gamma^2-13\gamma\delta+4\delta^2,$$
$$s(y,z)=2y^2-2yz+3z^2,$$
which are all positive definite. Hence, we obtain a four-dimensional partially alternative real division algebra $\mathcal{A}$ satisfying $\mathcal{I_A}=\{\pm i\}$. Moreover, $[v,w]=vw-wv=(e-c)1+(f-d)i$. Since $e-c\neq 0$ and $f-d=0$, the associated Lie algebra of $\mathcal{A}$ is isomorphic to $\mathfrak g_{4,9}$ (with parameter zero).
\end{proof}

\begin{proposition}
Let $\mathcal{A}$ be a four-dimensional partially alternative real algebra with a reflection. If $\mathcal{A}$ is a division algebra, then  $\mathcal L (\mathcal A)$ cannot be isomorphic to $\mathfrak g_1\oplus \mathfrak g_{3,5}$.
\end{proposition}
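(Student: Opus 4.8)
The plan is to split on the size of $\mathcal I_{\mathcal A}$. If $\mathcal I_{\mathcal A}\setminus\mathcal C\neq\varnothing$, that is, $\mathcal A$ has at least three imaginary units, then Proposition \ref{PropForMoreUnits} already gives $\mathcal L(\mathcal A)\cong\mathfrak g_1\oplus\mathfrak g_{3,7}$, and since $\mathfrak g_1\oplus\mathfrak g_{3,7}$ and $\mathfrak g_1\oplus\mathfrak g_{3,5}$ are distinct (non-isomorphic) entries of Mubarakzyanov's list, the claim holds in this case. So it suffices to treat $\mathcal I_{\mathcal A}=\{\pm i\}$; equivalently, if $\mathcal L(\mathcal A)$ were isomorphic to $\mathfrak g_1\oplus\mathfrak g_{3,5}$ then $\mathcal I_{\mathcal A}$ would necessarily equal $\{\pm i\}$.

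Assume therefore that $\mathcal I_{\mathcal A}=\{\pm i\}$ and, for contradiction, that $\mathcal L(\mathcal A)\cong\mathfrak g_1\oplus\mathfrak g_{3,5}$. Fix a basis $\{1,i,w,v\}$ with multiplication table $(T_p)$ (available by the theorem of \cite{HNT} recalled in the introduction). Among the four cases of Proposition \ref{Prop5.2}, the isomorphism type $\mathfrak g_1\oplus\mathfrak g_{3,5}$ arises only in case $(2)$, where $[v,w]=0$; in terms of $(T_p)$ this means $e=c$ and $f=d$, so $w$ and $v$ commute. The key structural consequence is that the multiplication on $W:=\operatorname{span}_{\mathbb R}\{w,v\}$ is symmetric and takes values in $\mathcal C\cong\mathbb C$: for $x_1,x_2\in W$ we have $x_1x_2=x_2x_1\in\mathcal C$, and writing $x_1x_2=\alpha(x_1,x_2)\,1+\beta(x_1,x_2)\,i$ defines two symmetric bilinear forms $\alpha,\beta\colon W\times W\to\mathbb R$ on the two-dimensional space $W$, with Gram matrices $\left(\begin{smallmatrix}a&c\\ c&g\end{smallmatrix}\right)$ and $\left(\begin{smallmatrix}b&d\\ d&h\end{smallmatrix}\right)$ in the basis $\{w,v\}$.

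Next I would invoke the proposition characterising the two-imaginary-unit case: since $\mathcal I_{\mathcal A}=\{\pm i\}$, the quadratic form $s(y,z)=by^2+(d+f)yz+hz^2$, which is exactly $x\mapsto\beta(x,x)$ for $x=yw+zv$, is definite; hence $\varepsilon\beta$ is a positive-definite inner product on $W$ for a suitable sign $\varepsilon\in\{\pm1\}$. Simultaneous diagonalisation (the spectral theorem applied to the $\varepsilon\beta$-self-adjoint operator representing $\alpha$) yields a basis $\{f_1,f_2\}$ of $W$ that is $\varepsilon\beta$-orthonormal and diagonalises $\alpha$; in particular $\alpha(f_1,f_2)=0$ and $\beta(f_1,f_2)=0$, so $f_1f_2=\alpha(f_1,f_2)\,1+\beta(f_1,f_2)\,i=0$ with $f_1\neq 0$ and $f_2\neq 0$. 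This exhibits a pair of zero divisors in $\mathcal A$, contradicting the division hypothesis, and the proof is complete.

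The only step requiring an idea, rather than bookkeeping, is the one in the second and third paragraphs: recognising that $[v,w]=0$ turns $W$ into a commutative system governed by two symmetric bilinear forms valued in $\mathbb C$, and that a definite form together with an arbitrary symmetric form on a two-dimensional space always possess a common orthogonal pair of vectors, which is precisely what manufactures the zero divisor. The initial reduction to $\mathcal I_{\mathcal A}=\{\pm i\}$ via Proposition \ref{PropForMoreUnits} and the extraction of $e=c$, $f=d$ from Proposition \ref{Prop5.2} are routine.
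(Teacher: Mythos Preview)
Your proof is correct and takes a genuinely different route from the paper's. Both arguments begin identically: reduce via Proposition~\ref{PropForMoreUnits} to the case $\mathcal I_{\mathcal A}=\{\pm i\}$, then extract $c=e$ and $d=f$ from case~(2) of Proposition~\ref{Prop5.2}. From there the paper proceeds computationally: it invokes the division criterion that the form $q(\gamma,\delta)$ must be positive definite, writes $4\det(Q)$ explicitly as a concave quadratic in the parameter $a$, and shows its maximum value $-\dfrac{(bh-d^2)(ch-dg)^2}{h^2}$ is strictly negative (using $bh-d^2>0$ from definiteness of $s$ and $ch-dg\neq 0$ from positive-definiteness of $q$), contradicting $\det(Q)>0$. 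Your argument is more structural: you recognise that $c=e$, $d=f$ makes the product on $W=\operatorname{span}_{\mathbb R}\{w,v\}$ a $\mathcal C$-valued symmetric bilinear map, split it into real and imaginary parts $\alpha,\beta$, use definiteness of $\beta$ (which is exactly $s$) to simultaneously diagonalise $\alpha$ and $\beta$, and read off a product $f_1f_2=0$ directly. Your approach avoids the auxiliary form $q$ and the coefficient algebra entirely, and manufactures the zero divisor explicitly rather than inferring its existence from the failure of a determinantal inequality; the paper's approach, on the other hand, ties the obstruction back to the quadratic-form machinery developed in Section~3, which keeps the argument within the framework already set up.
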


\begin{remark}
If
$s(y,z)=by^2+2dyz+hz^2$ is definite, then it is clear that $b,h$ have the same sign (i.e. both positive or both negative) and its determinant $\Delta=bh-d^2>0.$
\end{remark}
\begin{proof} By Proposition \ref{PropForMoreUnits}, if $\mathcal{I_A}\setminus\mathcal{C}\neq \varnothing$, then $ \mathcal L (\mathcal A)\cong \mathfrak g_1\oplus \mathfrak g_{3,7}$.
Without loss of generality, we can assume that   $\mathcal{A}$ has exactly two imaginary units, that is, the corresponding $s(y,z)$ is a definite quadratic form, and  $q(\gamma,\delta)$ is positive definite by Propositions 4 and 5.

Under the above conditions let us assume that  $\mathcal{L}(\mathcal{A})\cong \mathfrak g_1\oplus \mathfrak g_{3,5}$. Hence, by Proposition \ref{Prop5.2}, we have that   $0= [v,w]=vw-wv=(e-c)1+(f-d)i$, and, hence,   $c=e$ and $d=f$.

Let $Q$ and $S$ denote the symmetric matrices associated with the quadratic forms $q(\gamma,\delta)$ and $s(y,z)$, respectively.

Let us consider the determinant of  $Q$. It is a routine check to verify that
\begin{align*}
4\det(Q)&=4(ad-bc)(ch-dg)-(ah-bg)^2\\
&=(-h^2)a^2+(4cdh-4d^2g+2bgh)a+(4bcdg-4bc^2h-b^2g^2).
\end{align*}
We next observe that, since $s(y,z)$ is a definite form, the diagonal entry $S_{22}=h$ is nonzero. It follows that the determinant is a quadratic function in $a$.
\begin{align*}
4\det(Q)&=(-h^2)\left(a-\dfrac{4cdh-4d^2g+2bgh}{2h^2}\right)^2\\
&+\dfrac{(4cdh-4d^2g+2bgh)^2}{4h^2}+4bcdg-4bc^2h-b^2g^2\\
&=(-h^2)\left(a-\dfrac{4cdh-4d^2g+2bgh}{2h^2}\right)^2-\dfrac{4(bh-d^2)(ch-dg)^2}{h^2}    .
\end{align*}
Since $4\det(Q)$ is now a concave quadratic, $\det(Q)$ attains its maximum at $a=\dfrac{4cdh-4d^2g+2bgh}{2h^2}$ with corresponding maximum value:  $-\dfrac{(bh-d^2)(ch-dg)^2}{h^2}$.
\par\medskip
Since $s(y,z)$ is a definite form, the determinant of the associated matrix $\det(S)=bh-d^2$ is positive by Remark 3 and the diagonal entry $S_{22}=h$ is nonzero. Since $q(\gamma,\delta)$ is a positive definite form, 
the diagonal entry $Q_{22}=ch-dg$ is nonzero. Therefore, the maximum of the determinant is negative, which contradicts the assumption of the positive-definiteness of $q(\gamma,\delta)$. This completes the proof.
\end{proof}

\par\medskip

In conclusion, there are  a few open questions that invite further investigation.  It remains unknown whether any two of the properties—partial left alternativity, partial flexibility, and partial right alternativity—necessarily entail the third. Besides,  it is unclear whether the assumption that a partially alternative algebra admits a reflection can be omitted. This leads to the broader conjecture that every four-dimensional real division algebra with a non-trivial automorphism group must possess a reflection, for which no counterexamples are presently known. 

The classification of 4-dimensional partially alternative real division algebras  can be described in terms of the existence of reflections and the structure of their sets of imaginary units. More precisely, 

1. If $\mathcal A$  does not admit a reflection (in particular, if its automorphism group is trivial), then the structure of the set of imaginary units is poorly understood, and little is known about the algebraic properties in this case.
\par\medskip
2. If $\mathcal A$ admits a reflection, the classification further divides into the following subcases, depending on the number of imaginary units:
\par\medskip
$\bullet$ If $\mathcal A$ has exactly two imaginary units, then its Lie algebra is either of type $g_1\oplus g_{3,7}$ or $g_{4,9}.$ We plan to address this case  in our subsequent work. 
\par\medskip
$\bullet$ If  $\mathcal A$ has more than two imaginary units, then this case is completely solved in the present paper.

\par\medskip 
\noindent Once the first bullet point is solved, the classification of 4-dimensional partially alternative real division algebras admitting a reflection will be finalized.

\par\medskip
Beyond the problems discussed above, the referee has also suggested several further directions that naturally arise from our results. First, since $B = A^{\ast} = A \setminus \{0\}$ is a loop, it is of independent interest to determine which loop identities $B$ satisfies and how these reflect the underlying partially alternative structure of $A$. Second, one may approach $B$ as an analytic loop and study its tangent algebra, thereby linking our classification to the local differential geometry of nonassociative division loops. Finally, a more global perspective would come from describing the multiplication groups $\mathrm{Mult}(B)$, $\mathrm{Mult}_R(B)$, and $\mathrm{Mult}_L(B)$, generated respectively by $\{L_x, R_x \mid x \in B\}$, $\{R_x \mid x \in B\}$, and $\{L_x \mid x \in B\}$; understanding these groups would clarify how far the symmetries of $B$ extend beyond those captured by the automorphism group of $A$.

\end{document}